\newtheorem{df}{Definition}[section]
\newtheorem{thm}[df]{Theorem}
\newtheorem{rem}[df]{Remark}
\newtheorem{lem}[df]{Lemma}
\newtheorem{cor}[df]{Corollary}
\newtheorem{conj}[df]{Conjecture}
\newcommand{\eq}{eqnarray*}
\newcommand{\ti}{\tilde}
\title{Harmonic maps for Hitchin representations}
\author{Qiongling Li}
\address{
Centre for Quantum Geometry of Moduli Spaces (QGM)\\
Aarhus University\\
Ny Munkegade 118\\
8000 Aarhus C, Denmark}
\address{
Department of Mathematics\\
California Institute of Technology\\
1200 East California Boulevard\\
 Pasadena, US}
\email{qiongling.li@gmail.com}
\begin{document}
\maketitle
\begin{abstract}
Let $(S,g_0)$ be a hyperbolic surface, $\rho$ be a Hitchin representation for $PSL(n,\mathbb R)$, and $f$ be the unique $\rho$-equivariant harmonic map from $(\widetilde S, \widetilde g_0)$ to the corresponding symmetric space. We show its energy density satisfies $e(f)\geq 1$ and equality holds at one point only if $e(f)\equiv 1$ and $\rho$ is the base $n$-Fuchsian representation of $(S,g_0)$. In particular, we show given a Hitchin representation $\rho$ for $PSL(n,\mathbb R)$, every $\rho$-equivariant minimal immersion $f$ from a hyperbolic plane $\mathbb H^2$ into the corresponding symmetric space $X$ is distance-increasing, i.e. $f^*(g_{X})\geq g_{\mathbb H^2}$. Equality holds at one point only if it holds everywhere and $\rho$ is an $n$-Fuchsian representation.
\end{abstract}
\section{Introduction}
We study equivariant harmonic maps into the noncompact symmetric space for Hitchin representations. Consider a closed orientable surface $S$ of genus $g\geq 2$. By the uniformization theorem, a Riemann surface structure, or equivalently a hyperbolic metric on $S$ gives rise to a Fuchsian holonomy of $\pi_1(S)$ into $PSL(2,\mathbb R)$. Composing it with the unique irreducible representation embedding of $PSL(2,\mathbb R)$ into $PSL(n,\mathbb R)$, we get the base $n$-Fuchsian representation into $PSL(n,\mathbb R)$ of the Riemann surface. The Hitchin representations are exactly deformations of the $n$-Fuchsian representations. Those form a connected component inside the representation variety for $PSL(n,\mathbb R)$, called the Hitchin component. Hitchin representations possess lots of nice properties, for example: Hitchin \cite{Hitchin92} showed that they are irreducible; Labourie \cite{LabourieAnosov} showed that they are discrete, faithful quasi-isometric embeddings.

Given $M, N$ two Riemannian manifolds and consider an equivariant map $f:\widetilde M\rightarrow N$ for a representation $\rho:\pi_1(M)\rightarrow Isom(N)$. Recall the energy density of $f$  is $e(f)=\frac{1}{2}||df||^2$ with respect to the metrics on $\widetilde M$ and $N$. By equivariance, $e(f)$ is invariant under $\pi_1(M)$, so it gives a well-defined function on $M$, also called the energy density. The energy $E(f)$ is the integral of the energy density $e(f)$ over $M$. Equivariant harmonic maps are critical points of the energy functional. 

Fix a Riemann surface structure $\Sigma$ over $S$. Following the work of Donaldson \cite{Donaldson} and Corlette \cite{Corlette}, for any irreducible representation $\rho$ into a semisimple Lie group $G$, there exists a unique $\rho$-equivariant harmonic map $f$ from $\widetilde\Sigma$ to the corresponding symmetric space of $G$. In the base $n$-Fuchsian representation of $\Sigma$, the harmonic map $f$ is a totally geodesic embedding of $\mathbb H^2$. The equivariant harmonic map further gives rise to a Higgs bundle, a pair $(E,\phi)$ consisting of a holomorphic vector bundle over $\Sigma$ and a holomorphic section of $End(E)\otimes K$, the Higgs field. Conversely, by the work of Hitchin \cite{Hitchin92} and Simpson \cite{Simpson}, a stable Higgs bundle admits a unique harmonic metric on the bundle solving the Hitchin equation. The harmonic metric further gives rise to an irreducible representation $\rho$ into $G$ and a $\rho$-equivariant harmonic map into the corresponding symmetric space. These two directions between representations and Higgs bundles together give the celebrated non-abelian Hodge correspondence. In particular, Hitchin component has a nice Higgs bundle description by Hitchin \cite{Hitchin92}.


We study the energy density of equivariant harmonic maps for Hitchin representations. Let $X$ denote the symmetric space $SL(n,\mathbb R)/SO(n)$. Renormalize the metric on $X$ such that in the base $n$-Fuchsian case, the totally geodesic copy of the hyperbolic plane inside $X$ is also of curvature $-1$. Here we prove
\begin{thm}\label{maintheorem}(Theorem \ref{Maintheorem})
Let $\rho$ be a Hitchin representation for $PSL(n,\mathbb R)$, $g_0$ be a hyperbolic metric on $S$, and $f$ be the unique $\rho$-equivariant harmonic map from $(\widetilde S, \widetilde g_0)$ to the symmetric space $X$. Then its energy density $e(f)$ satisfies
$$e(f)\geq 1.$$
Moreover, equality holds at one point only if $e(f)\equiv 1$ in which case $\rho$ is the base $n$-Fuchsian representation of $(S,g_0)$.  
\end{thm}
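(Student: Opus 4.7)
The plan is to use the non-abelian Hodge correspondence to rephrase the problem in terms of the harmonic metric of the associated Higgs bundle, and then run a maximum-principle argument on the resulting Toda-type system.

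\textbf{Higgs bundle setup.} By Hitchin's parameterization, the Higgs bundle $(E,\phi)$ associated with $\rho$ has underlying bundle $E=\bigoplus_{i=1}^{n}K^{(n+1)/2-i}$, with Higgs field $\phi=\phi_{0}+\phi_{+}$, where $\phi_{0}$ is the principal nilpotent part (the tautological identity maps between consecutive line-bundle summands) and $\phi_{+}$ encodes the Hitchin holomorphic differentials $q_{2},\ldots,q_{n}\in\bigoplus_{i}H^{0}(\Sigma,K^{i})$. The base $n$-Fuchsian case corresponds to $\phi_{+}\equiv 0$. Since $\rho$ lands in $PSL(n,\mathbb R)$, the harmonic metric $H$ respects the natural real structure and is forced to be diagonal: $H=\operatorname{diag}(h_{1},\ldots,h_{n})$ with $h_{i}h_{n+1-i}=1$. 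Writing $g_{0}=\sigma|dz|^{2}$ locally and computing $|\phi|^{2}_{H,g_{0}}$ yields a pointwise splitting $e(f)=e_{0}+e_{+}$, where $e_{0}$ is a positive constant multiple of $\frac{1}{\sigma}\sum_{i=1}^{n-1}h_{i+1}/h_{i}$ (the contribution of $\phi_{0}$) and $e_{+}\geq 0$ is quadratic in the $q_{i}$ (the contribution of $\phi_{+}$). The normalization of $X$ is fixed so that in the Fuchsian case $e_{0}\equiv 1$, $e_{+}\equiv 0$, and hence $e(f)\equiv 1$.

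\textbf{Toda system and Sampson-type inequality.} In this diagonal setup, the Hitchin equation $F_{H}+[\phi,\phi^{*H}]=0$ becomes a coupled Toda-type system of PDEs for the $u_{i}=\log h_{i}$, with source terms quadratic in the Hitchin differentials. The heart of the argument is to show that the Fuchsian contribution $e_{0}$ itself satisfies $e_{0}\geq 1$, from which $e(f)=e_{0}+e_{+}\geq 1$ follows. For $n=2$ a direct calculation with the rank-$2$ Toda equation produces the classical Sampson-type identity
$$\Delta\log e_{0}=2\sigma(e_{0}-e_{+}-1);$$
at a minimum of $e_{0}$ the left-hand side is nonnegative, so $e_{0}\geq e_{+}+1\geq 1$ there, and therefore everywhere. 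For general $n$ the plan is to identify the correct combination of $\log(h_{i+1}/h_{i})$ playing the role of $\log e_{0}$ and satisfying an analogous differential inequality $\Delta\log e_{0}\geq A(e_{0}-1)+\text{nonneg}$ with $A>0$, so that the same maximum principle argument applies.

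\textbf{Rigidity and main obstacle.} For the equality statement, if $e(f)(p)=1$ at some $p$ then $e_{0}(p)=1$ and $e_{+}(p)=0$; the strong maximum principle applied to the Sampson-type inequality propagates this to $e_{0}\equiv 1$, $e_{+}\equiv 0$, hence $|q_{i}|\equiv 0$ for all $i$. Therefore $\phi=\phi_{0}$, and by Hitchin's construction $\rho$ is the $n$-Fuchsian representation associated to the Riemann surface structure of $g_{0}$. The main obstacle is the construction of the correct Sampson-type quantity in arbitrary rank: the Toda system is strongly coupled, so naive candidates (e.g.\ a single ratio $h_{n}/h_{1}$) do not yield a differential inequality with the right sign. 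One must carefully combine all subdiagonal ratios with weights dictated by the Toda structure, exploiting the duality $h_{i}h_{n+1-i}=1$, so that the cross terms conspire to produce the clean bound $\Delta\log e_{0}\geq A(e_{0}-1)+\text{nonneg}$ together with tightness at the Fuchsian solution, which is what forces the rigidity conclusion.
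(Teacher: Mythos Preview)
Your proposal rests on a false claim: the harmonic metric $H$ is \emph{not} diagonal with respect to the holomorphic splitting $E=\bigoplus_i K^{(n+1)/2-i}$ in general. The real structure coming from $PSL(n,\mathbb R)$ imposes a symmetry on $H$, but this is far weaker than diagonality. Diagonality is known only in special families---cyclic $(0,\dots,0,q_n)$, sub-cyclic $(0,\dots,0,q_{n-1},0)$, and Fuchsian $(q_2,0,\dots,0)$---and fails once generic $q_i$'s are turned on. Without diagonality you cannot write $e(f)=e_0+e_+$ with $e_0$ a simple sum of ratios $h_{i+1}/h_i$ and $e_+\geq 0$ determined by the $q_i$; the cross terms in $\operatorname{tr}(\phi\phi^{*H})$ mix everything together. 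This is precisely the obstacle the paper identifies and circumvents.

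The paper's fix is to abandon the holomorphic splitting and replace it by a new $C^\infty$ orthogonal decomposition $E=L_1\oplus\cdots\oplus L_n$ obtained by Gram--Schmidt on the holomorphic filtration $F_k=\bigoplus_{i\leq k}K^{(n+1)/2-i}$. In this frame $H$ is diagonal by construction, at the cost of introducing off-diagonal pieces $\beta_{ij}$ in $\bar\partial_E$ and $a_{ij}$ in $\phi$. The Hitchin equation then yields a Toda-type system for the deviations $z_l$ from the Fuchsian metric, and the key step---which replaces your hoped-for Sampson inequality on $\log e_0$---is to show that the partial sums $v_k=\sum_{l\leq k}z_l$ are strictly negative. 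This is done by summing the first $k$ equations; a cancellation identity makes all the unknown $\|\beta_{ij}\|^2,\|a_{ij}\|^2$ terms appear with the same sign, and then a maximum-principle argument on the $v_k$ (not on $e_0$ directly) gives $v_k<0$. The bound $e(f)>1$ then follows from an AM--GM step applied to $\sum_k \frac{k(n-k)}{2}e^{z_{k+1}-z_k}$, rewritten in terms of the $v_k$. Your ``main obstacle'' paragraph correctly senses that the right combination of subdiagonal data is nontrivial, but the actual mechanism is different from what you sketch, and it only becomes visible after passing to the orthogonal frame.
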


\begin{rem}
Consider the $(1,1)$-part of a Riemannian metric $g$, that is, $g^{1,1}=2Re(g(\partial_{z},\partial_{\bar z})dz\otimes d\bar z)$.  Equivalently, Theorem \ref{maintheorem} says the pullback metric of $f$ satisfies $(f^*g_{X})^{1,1}\geq \widetilde g_0$ as the domination of two $2$-tensors, meaning their difference is a definite $2$-tensor.
\end{rem}

\begin{rem} In the case $n=2$, it concerns harmonic diffeomorphism between surfaces and is proven in Sampson \cite{Sampson}. For $n\geq 3$, the author and Dai in \cite{DL} verified the cyclic families. \end{rem}

Understanding the harmonic maps involves estimating the harmonic metric solving the Hitchin equation for Higgs bundles. The Hitchin equation is a second-order nonlinear elliptic system, which is highly non-trivial in general. The property of cyclic Higgs bundles used in \cite{DL} is that the solution metric is diagonal with respect to the holomorphic splitting, which is also essential in Labourie's proof \cite{LabourieCyclic} showing the uniqueness of equivariant minimal surface for Hitchin representations into split real rank $2$ Lie group.

However, the diagonal property no longer holds for general cases and it seems hard to directly analyze the Hitchin equation for a particular Higgs bundle. Here we are able to analyze the Hitchin equation without requiring cyclic condition by choosing another orthogonal splitting of the bundle $E$. It suggests the new splitting could be a more effective tool to study Hitchin representations and related harmonic maps.\\

Recall the energy of $f$ is the integral of the energy density on $S$ with respect to the hyperbolic volume form. As a corollary of Theorem \ref{maintheorem}, we reprove the following result of Hitchin \cite{Hitchin92}.
\begin{cor}\label{energy} Under the same assumptions of Theorem \ref{maintheorem}, the energy $E(f)$ of every $\rho$-equivariant harmonic map $f$ satisfies 
$$E(f)\geq -2\pi\cdot \chi(S).$$
Equality holds if and only if $\rho$ is the base $n$-Fuchsian representation of $(S,g_0)$.
\end{cor}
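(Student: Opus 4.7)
The plan is to simply integrate the pointwise inequality $e(f)\geq 1$ provided by Theorem \ref{maintheorem} against the hyperbolic volume form and then invoke the Gauss--Bonnet theorem to evaluate the resulting constant. Since $g_0$ is a hyperbolic metric on $S$, its Gaussian curvature is identically $-1$, so
\[
\mathrm{Vol}(S,g_0)=\int_S 1\,dV_{g_0}=-\int_S K_{g_0}\,dV_{g_0}=-2\pi\,\chi(S).
\]
Applying the pointwise bound of Theorem \ref{maintheorem} and integrating against $dV_{g_0}$ yields
\[
E(f)=\int_S e(f)\,dV_{g_0}\ \geq\ \int_S 1\,dV_{g_0}\ =\ -2\pi\,\chi(S),
\]
which is the asserted inequality. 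No delicate analysis is needed here; the work has already been done in Theorem \ref{maintheorem}.

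For the equality characterization, suppose $E(f)=-2\pi\,\chi(S)$. Then $\int_S\bigl(e(f)-1\bigr)\,dV_{g_0}=0$, and because $e(f)-1\geq 0$ everywhere and $e(f)$ is continuous, this forces $e(f)\equiv 1$ on $S$. Combined with the rigidity clause of Theorem \ref{maintheorem} (vanishing at a single point already forces the $n$-Fuchsian conclusion), this identifies $\rho$ as the base $n$-Fuchsian representation of $(S,g_0)$. Conversely, when $\rho$ is the base $n$-Fuchsian representation of $(S,g_0)$, the unique equivariant harmonic map is the totally geodesic embedding $\mathbb H^2\hookrightarrow X$; with the normalization of the metric on $X$ fixed in the introduction (so that the embedded $\mathbb H^2$ has curvature $-1$), this embedding is an isometry on tangent spaces, hence $e(f)\equiv 1$ and $E(f)=-2\pi\,\chi(S)$. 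The only place one must be careful is this normalization step, since without it the constant $1$ in Theorem \ref{maintheorem} would be replaced by a dimension-dependent constant, shifting the corollary's bound accordingly; but this is settled by the stated convention on the metric of $X$.
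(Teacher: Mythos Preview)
Your argument is correct and is precisely the intended one: the paper does not even spell out a proof of this corollary, presenting it as an immediate consequence of integrating the pointwise bound $e(f)\geq 1$ from Theorem~\ref{maintheorem} over $(S,g_0)$ and applying Gauss--Bonnet, together with the rigidity clause for the equality case.
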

\begin{rem}
The energy is also the $L^2$-norm of the Higgs field, which is a Morse-Bott function on the moduli space of Higgs bundles. It was first considered by Hitchin \cite{Hitchin87, Hitchin92} to determine the topology of the moduli space and hence the representation variety. Hitchin's original proof of Corollary \ref{energy} in Higgs bundle language relies on the K\"ahler structure on the moduli space and that the energy function is a moment map of a $S^1$-action on the moduli space.
\end{rem}

When the harmonic map is conformal, it is a minimal immersion. Labourie \cite{LabourieEnergy} showed that for any Hitchin representation $\rho$, there exists a $\rho$-equivariant immersed minimal surface inside the symmetric space $X$. 

As a special case of Theorem \ref{maintheorem}, we show the metric domination theorem, which was conjectured by Dai and the author in \cite{DL}.
\begin{thm}\label{secondmaintheorem}\label{DominationConjecture}
Let $\rho$ be a Hitchin representation for $PSL(n,\mathbb R)$. Then the pullback metric of every $\rho$-equivariant minimal immersion $f$ of the hyperbolic plane $\mathbb H^2$ into the symmetric space $X$ satisfies
\begin{\eq}
f^*(g_{X})\geq g_{\mathbb H^2},
\end{\eq}
Equality holds at one point if and only if it holds everywhere in which case $\rho$ is an $n$-Fuchsian representation. 
\end{thm}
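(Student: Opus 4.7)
The strategy is to deduce Theorem \ref{secondmaintheorem} directly from Theorem \ref{maintheorem}. The argument has two short steps, and essentially no extra analysis is required beyond what is already packaged into the main theorem.

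First, I would exploit the fact that for a minimal immersion, conformality forces the Hopf differential $(f^*g_X)^{2,0}$ to vanish, so that
\begin{equation*}
f^*g_X \;=\; (f^*g_X)^{1,1} \;=\; e(f)\cdot g_{\mathbb H^2}.
\end{equation*}
Consequently the tensorial inequality $f^*g_X \geq g_{\mathbb H^2}$ is equivalent to the scalar pointwise bound $e(f) \geq 1$, and equality holds at a point (resp.\ everywhere) in one formulation iff it holds in the other.

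Next I would identify $f$ with the harmonic map of Theorem \ref{maintheorem}. Because $f$ is $\rho$-equivariant with $\rho\colon\pi_1(S)\to PSL(n,\mathbb R)$, the group $\pi_1(S)$ acts on $\mathbb H^2$ by holomorphic isometries, hence through a Fuchsian representation $j\colon\pi_1(S)\to PSL(2,\mathbb R)$; the quotient $(S, g_0):=\mathbb H^2/j(\pi_1(S))$ is a closed hyperbolic surface and $(\widetilde S, \widetilde g_0)=(\mathbb H^2, g_{\mathbb H^2})$. Since $f$ is minimal it is, in particular, a $\rho$-equivariant harmonic map from $(\widetilde S,\widetilde g_0)$ into $X$; irreducibility of Hitchin representations (Hitchin) together with the Corlette--Donaldson uniqueness theorem then identifies $f$ with the harmonic map of Theorem \ref{maintheorem}.

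Combining the two steps, Theorem \ref{maintheorem} gives $e(f)\geq 1$ and hence $f^*g_X\geq g_{\mathbb H^2}$. For the rigidity statement, equality at a single point forces $e(f)=1$ there, which by Theorem \ref{maintheorem} upgrades to $e(f)\equiv 1$ together with $\rho$ being the base $n$-Fuchsian representation of $(S,g_0)$; the identity $f^*g_X = e(f)\cdot g_{\mathbb H^2}$ then turns $e(f)\equiv 1$ into equality of metrics everywhere. Since the hard analytic work is already in Theorem \ref{maintheorem}, no real obstacle arises here; the only structural point worth double-checking is that the domain action of $\pi_1(S)$ is genuinely by cocompact Fuchsian isometries, which is what legitimizes the reduction to the closed-surface setup of Theorem \ref{maintheorem}.
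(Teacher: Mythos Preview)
Your proposal is correct and matches the paper's approach: the paper explicitly presents Theorem \ref{secondmaintheorem} as ``a special case of Theorem \ref{maintheorem}'' without writing out the deduction, and your two steps (vanishing Hopf differential gives $f^*g_X = e(f)\cdot g_{\mathbb H^2}$, then Corlette--Donaldson uniqueness identifies $f$ with the harmonic map of Theorem \ref{maintheorem}) are precisely the details needed. The one point you flag about the domain action being cocompact Fuchsian is indeed implicit in the paper's setup and is justified by Labourie's existence result \cite{LabourieEnergy}, which produces the minimal surface by varying the conformal structure on the closed surface $S$.
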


\begin{rem}
For general representation which are not Hitchin, one should not expect similar phenomenon happens. For example, for any reductive $SL(2,\mathbb C)$-representation $\rho$, every $\rho$-equivariant minimal mapping from $\mathbb H^2$ into $\mathbb H^3$ is distance-decreasing by Ahlfors' lemma (see \cite{DominationFuchsian}). Even for split real groups, there exist many maximal $Sp(4,\mathbb R)$-representations and equivariant minimal immersions from $\mathbb H^2$ into $Sp(4,\mathbb R)/U(2)$ are distance-decreasing (see \cite{DL2}).
\end{rem}

We equip $\rho$ with a translation length spectrum $\bar l_{\rho}:\gamma\in \pi_1(S)\rightarrow\mathbb R^+$ given by $$\bar l_{\rho}(\gamma):=\inf_{M}\inf_{x\in M}d_M(x,\rho(\gamma)(x)),$$ where $M$ goes through all $\rho$-equivariant minimal surfaces in the symmetric space $X$ and $d_M$ is induced by the metric on the minimal surface $M$. One may compare $\bar l_{\rho}$ with the classical translation length spectrum $ l_{\rho}:\gamma\in \pi_1(S)\rightarrow\mathbb R^+$ given by $$l_{\rho}(\gamma):=\inf\limits_{x\in X}d_{X}(x,\rho(\gamma)(x)),$$ where $d_{X}$ is induced by the unique $G$-invariant Riemannian metric on $X$. Notice that for an $n$-Fuchsian representation $j$, $\bar l_j=l_j$ and the fact $d_{X}\leq d_M$ implies that $l_{\rho}\leq \bar l_{\rho}$. As a direct corollary of Theorem \ref{secondmaintheorem}, we have 

\begin{cor} For any Hitchin representation $\rho$ for $PSL(n,\mathbb R)$, then either it is an $n$-Fuchsian representation or there exists an $n$-Fuchsian representation $j$ such that $\bar l_{\rho}>\bar l_j=l_j$. \end{cor}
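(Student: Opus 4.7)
The plan is to derive the corollary directly from Theorem \ref{DominationConjecture} together with Labourie's existence result for equivariant minimal surfaces. Assume $\rho$ is not $n$-Fuchsian. By \cite{LabourieEnergy}, there exists a $\rho$-equivariant minimal immersion $f\colon\mathbb H^2\to X$; such a map is equivariant with respect to a specific Fuchsian representation $\pi_1(S)\to PSL(2,\mathbb R)$, and composing with the principal embedding $PSL(2,\mathbb R)\hookrightarrow PSL(n,\mathbb R)$ yields the $n$-Fuchsian representation $j$ that will serve as the one produced in the statement.

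Apply Theorem \ref{DominationConjecture} to $f$. Since $\rho$ is not $n$-Fuchsian, the strict pointwise inequality $f^{*}g_{X}>g_{\mathbb H^2}$ holds everywhere. By equivariance of $f$, for every nontrivial $\gamma\in\pi_1(S)$ the translation length of $\rho(\gamma)$ on the minimal surface $M=f(\mathbb H^2)$ with induced metric $f^{*}g_{X}$ coincides with the translation length of $j(\gamma)$ on $(\mathbb H^2,f^{*}g_{X})$. Strict metric domination implies strict domination of all path-lengths, and therefore of their infimum, giving
\[
\inf_{x\in M}d_{M}(x,\rho(\gamma)x)\;>\;l_{j}(\gamma).
\]
For the $n$-Fuchsian representation $j$, the unique $j$-equivariant minimal surface in $X$ is the totally geodesic copy of $\mathbb H^2$, whose induced metric is $g_{\mathbb H^2}$ by the normalization fixed in the introduction; hence $\bar l_{j}(\gamma)=l_{j}(\gamma)$.

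It remains to pass from the specific minimal surface $M$ to the infimum $\bar l_{\rho}(\gamma)=\inf_{M'}T_{M'}(\gamma)$ in the definition of $\bar l_{\rho}$, and this is the main anticipated obstacle. The uniqueness of $\rho$-equivariant minimal surfaces is only known in split real rank two by Labourie \cite{LabourieCyclic}; in that case $M$ itself realizes the infimum and one concludes immediately that $\bar l_{\rho}(\gamma)=T_{M}(\gamma)>l_{j}(\gamma)=\bar l_{j}(\gamma)$. In general, the same metric-domination argument applied to any other $\rho$-equivariant minimal surface $M'$ with its own associated Fuchsian representation $j_{M'}$ yields $T_{M'}(\gamma)>l_{j_{M'}}(\gamma)$; a compactness argument for the family $\{j_{M'}\}$ — which remains confined to a bounded region of Teichm\"uller space since the $T_{M'}(\gamma)$ are uniformly controlled by the $X$-translation spectrum of $\rho$, an invariant of $\rho$ alone — then permits the selection of a single $n$-Fuchsian $j$ for which the strict inequality $\bar l_{\rho}>l_{j}=\bar l_{j}$ persists on all of $\pi_1(S)$.
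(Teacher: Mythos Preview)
Your first two paragraphs recover exactly what the paper intends: it states this result as a ``direct corollary'' of Theorem~\ref{DominationConjecture} and gives no further argument. Pick a $\rho$-equivariant minimal immersion $f$ via Labourie, let $j$ be the $n$-Fuchsian representation attached to the domain conformal structure, and use the strict pointwise domination $f^{*}g_X>g_{\mathbb H^2}$ (uniform, by compactness of the quotient) to obtain strict domination of translation lengths on that particular minimal surface $M$. Together with $\bar l_j=l_j$ this is the paper's intended deduction.

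You are right to flag the infimum over all $\rho$-equivariant minimal surfaces in the definition of $\bar l_\rho$: the paper does not address it, and uniqueness of equivariant minimal surfaces for Hitchin representations is only known in rank~$2$ \cite{LabourieCyclic}. However, your proposed compactness fix in the final paragraph does not work as written. The only relation you invoke between $T_{M'}(\gamma)$ and the $X$-spectrum is $T_{M'}(\gamma)\geq l_\rho(\gamma)$ (from $d_X\leq d_{M'}$), which is a \emph{lower} bound; to confine $\{j_{M'}\}$ to a bounded region of Teichm\"uller space you would need \emph{upper} bounds on $l_{j_{M'}}(\gamma)<T_{M'}(\gamma)$ for a generating set, and nothing you have said supplies these. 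Even granting compactness of $\{j_{M'}\}$, passing from the family of strict inequalities $T_{M'}(\gamma)>l_{j_{M'}}(\gamma)$ to a single $j$ with $\inf_{M'}T_{M'}(\gamma)>l_j(\gamma)$ for every $\gamma$ is a further nontrivial step: an infimum of strict inequalities need not remain strict, and the different $j_{M'}$ are not a priori comparable. So your argument is complete in rank~$2$, but for $n\geq 4$ the last step is a genuine gap---one the paper itself leaves implicit by treating the corollary as immediate.
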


\subsection*{Organization of the paper} In Section \ref{pre}, we review the theory of Higgs bundles and the non-abelian Hodge correspondence. In Section \ref{Hitchin}, we first explain Higgs bundle description of Hitchin representations in terms of holomorphic differentials, then introduce a new expression of Higgs bundle in Hitchin component, and in the end deduce the Hitchin equation using the new expression. In Section \ref{proof}, we show the main theorem. In Section \ref{question}, we discuss some further questions.

\subsection*{Acknowledgement} The author is supported in part by the center of excellence grant `Center for Quantum Geometry of Moduli Spaces' from the Danish National Research Foundation (DNRF95). The author acknowledges support from U.S. National Science Foundation grants DMS 1107452, 1107263, 1107367 ``RNMS: GEometric structures And Representation varieties" (the GEAR Network).

\section{Preliminaries}\label{pre}
In this section, we briefly recall the theory of Higgs bundles and the non-abelian Hodge correspondence. One may refer \cite{Bar}\cite{DL}\cite{LabourieCyclic} for more details. For $p\in S$, let $\pi_1=\pi_1(S,p)$ be the fundamental group of $S$. Let $\Sigma$ be a Riemann surface structure on $S$, $\tilde{\Sigma}$ be the universal cover, and $K_{\Sigma}$ be the canonical line bundle over $\Sigma$. Let $g_0$ be the conformal hyperbolic metric on $\Sigma$ of constant curvature $-1$.

Through this whole section, $G$ denotes $SL(n,\mathbb C)$ and $K$ denotes $SU(n)$.

\subsection{From Higgs bundles to harmonic maps and representations}
\begin{df}
A $G$-Higgs bundle over $\Sigma$ is a pair $(E,\phi)$ containing a holomorphic rank $n$ vector bundle of trivial determinant and a trace-free holomorphic bundle map from $E$ to $E\otimes K_{\Sigma}$. 

We call $(E,\phi)$ stable if any proper $\phi$-invariant holomorphic subbundle $F$ has negative degree, and polystable if it is a direct sum of stable Higgs bundles of degree $0$. The moduli space $\mathcal{M}_{Higgs}(G)$ is a space of gauge equivalent classes of polystable $G$-Higgs bundles. 
\end{df}

\begin{thm}(Hitchin \cite{Hitchin87} and Simpson \cite{Simpson})\label{dh}
Let $(E,\phi)$ be a stable $G$-Higgs bundle. Then there exists a unique Hermitian metric $H$ on $E$ compatible with $G$-structure, called harmonic metric, solving the Hitchin equation
\begin{eqnarray*}
F^{\nabla^H}+[\phi,\phi^{*_H}]=0, 
\end{eqnarray*}
where ${\nabla^H}$ is the Chern connection of $H$
and $\phi^{*_{H}}$ is the adjoint of $\phi$ with respect to $H$.
\end{thm}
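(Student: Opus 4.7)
The plan is to adapt the Donaldson--Uhlenbeck--Yau strategy for Hermitian--Einstein metrics to this setting, where the curvature term $F^{\nabla^H}$ is replaced by the moment map $F^{\nabla^H}+[\phi,\phi^{*_H}]$. Fix an auxiliary background metric $H_0$ on $E$ compatible with the $SL(n,\mathbb C)$-structure; any other compatible Hermitian metric is of the form $H=H_0h$ for a positive self-adjoint section $h$ of $\mathrm{End}(E)$ with $\det h=1$. The Hitchin equation then becomes a nonlinear elliptic PDE for $h$, and the first step is to recognize it as the Euler--Lagrange equation of a Donaldson-type functional $\mathcal{M}(H_0,H)$ defined via the Bott--Chern secondary class plus the $L^2$-norm of $\phi$ measured with the varying metric. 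The key structural property is convexity of $\mathcal{M}$ along geodesic paths $H_t=H_0 e^{ts}$ in the space of Hermitian metrics, from which uniqueness of a critical point will follow directly: if two harmonic metrics existed, convexity would force the interpolating endomorphism to be $\phi$-invariant and parallel, contradicting stability.

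For existence I would use the Donaldson heat flow
\begin{equation*}
H^{-1}\dot H \;=\; -2i\bigl(F^{\nabla^H}+[\phi,\phi^{*_H}]\bigr),
\end{equation*}
projected to preserve $\det H=\det H_0$. Short-time existence follows from standard parabolic theory since the equation is a nonlinear heat equation in $h$. For long-time existence, one applies a maximum principle to the pointwise norm of the moment map: a Bochner-type computation shows that $\bigl|F^{\nabla^H}+[\phi,\phi^{*_H}]\bigr|^2_{H_t}$ satisfies a subheat inequality along the flow, so it remains bounded and $h$ stays in a compact set of the space of metrics on any finite time interval.

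The main obstacle, as in the Hermitian--Yang--Mills case, is proving subsequential convergence as $t\to\infty$. The argument is by contradiction: assume the flow does not converge in $C^\infty$. After gauge fixing one extracts a weak limit of the normalized endomorphisms $h_t/\|h_t\|$, and one shows its spectral projectors define a weakly holomorphic subsheaf $F\subset E$ that is $\phi$-invariant, with $\deg(F)\ge 0$. This is where stability enters: since $(E,\phi)$ is stable, any proper $\phi$-invariant subsheaf has strictly negative degree, a contradiction. Running the Uhlenbeck--Yau regularity theorem to upgrade the weak subsheaf to an honest coherent subsheaf is the most delicate analytic step; once convergence is established the limit metric $H_\infty$ solves the Hitchin equation, and compatibility with the $SL(n,\mathbb C)$-structure is preserved because the flow preserves $\det H$.

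Finally I would address the polystable case of the moduli space statement by splitting $(E,\phi)$ as a direct sum of stable Higgs bundles of degree zero, solving the equation on each summand using the stable case, and taking the orthogonal direct sum metric; the resulting $H$ satisfies the Hitchin equation, and uniqueness up to automorphisms of the decomposition follows from the convexity argument applied blockwise.
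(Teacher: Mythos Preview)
The paper does not give its own proof of this theorem: it is stated as a cited result of Hitchin \cite{Hitchin87} and Simpson \cite{Simpson}, and the paper immediately proceeds to use it. So there is nothing in the paper to compare your proposal against.

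That said, your outline is a faithful sketch of Simpson's argument, which is indeed the Donaldson--Uhlenbeck--Yau method adapted to the Higgs setting: Donaldson functional, heat flow, maximum principle for the moment map norm, and the extraction of a destabilizing $\phi$-invariant subsheaf in case of non-convergence. Two small remarks. First, the statement you are asked to prove concerns only the stable case, so the final paragraph on polystable bundles is unnecessary here. Second, in your convexity/uniqueness step you should be explicit that the second variation of the Donaldson functional along a geodesic $H_t=H_0e^{ts}$ is a sum of nonnegative terms $\|\bar\partial_E s\|^2 + \|[\phi,s]\|^2$; vanishing forces $s$ to be holomorphic and to commute with $\phi$, and then stability (not merely the absence of nonnegative-degree subsheaves) implies $s$ is a scalar, hence zero since it is trace-free. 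With those clarifications the sketch is sound, but since the paper treats this as background you would normally just cite Hitchin and Simpson rather than reproduce the proof.
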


The Hitchin equation is equivalent to the $G$-connection $D=\nabla^{H}+\phi+\phi^{*_{H}}$ being flat. The holonomy of $D$ gives a reductive representation $\rho:\pi_1\to G$ and the pair $(E,D)$ is isomorphic to $\widetilde{\Sigma}\times_{\rho}\mathbb{C}^n$ equipped with the natural flat connection.  

A choice of a Hermitian metric $H$ on $E$ is equivalent to a reduction of the unimodule frame bundle $P_G$ of $E=\widetilde{\Sigma}\times_{\rho}\mathbb{C}^n$ to $K$ by considering the unitary frame bundle. It then descends to be a section of $P_G/K=\widetilde{\Sigma}\times_{\rho}G/K$ over $\Sigma$. Equivalently, it corresponds to a $\rho$-equivariant map $f: \tilde{\Sigma}\to G/K$. Moreover, the map arising from a Hermitian metric solving the Hitchin equation is harmonic. If the holonomy of the representation lies in $SL(n,\mathbb R)$, the harmonic map lies in a totally geodesic copy of $SL(n,\mathbb R)/SO(n)$ inside $G/K=SL(n,\mathbb C)/SU(n)$.

\subsection{From representations and harmonic maps to Higgs bundles}

Denote $Rep(\pi_1,G)$ the set of conjugate classes of reductive representations of $\pi_1(S)$ into $G$. Given a reductive representation, by the work of Corlette \cite{Corlette} and Donaldson \cite{Donaldson}, there exists a $\rho$-equivariant harmonic map $f:\widetilde{\Sigma}\rightarrow G/K$, which is unique up to the centralizer of $\rho(\pi_1)$. 

Let's explain how to extract a Higgs bundle from an equivariant harmonic map. Denote \begin{\eq}
\mathfrak{g}=sl(n,\mathbb{C}),\quad \mathfrak{k}=su(n),\quad \mathfrak{p}=\{X\in sl(n,\mathbb{C}):\bar X^t=X\}.
\end{\eq} 

With respect to the $Ad(K)$-invariant decomposition $\mathfrak{g}=\mathfrak{k}+\mathfrak{p}$, we can decompose the Maurer-Cartan form of $G$, $\omega=\omega^{\mathfrak{k}}+\omega^{\mathfrak{p}}$, where $\omega^{\mathfrak{k}}\in \Omega^1(G,\mathfrak{k}), \omega^{\mathfrak{p}}\in \Omega^1(G,\mathfrak{p})$. $\omega^{\mathfrak k}$ is a connection on the $K$-bundle $G\rightarrow G/K$ and $\omega^{\mathfrak{p}}$ descends to be an element in $\Omega^1(G/K,G\times_{Ad_K}\mathfrak{p})$, giving an isomorphism $T(G/K)\cong G\times_{Ad_K}\mathfrak{p}$. Pulling back the $K$-bundle $G\rightarrow G/K$ to $\widetilde\Sigma$ by $f$, we get a principal $K$-bundle $P_K$ on $\widetilde\Sigma$. Then\\
(1) $f^*\omega^{\mathfrak k}$ is a connection form on $P_K$, hence a unitary connection form $A$ on the complexified bundle $P_G$. Denote the covariant derivative of $A$ as $d_A$.\\
(2) $f^*\omega^{\mathfrak{p}}$ is a section of $T^*\tilde{\Sigma}\otimes (\tilde{P}_K\times_{Ad_K}\mathfrak{p})$ over $\tilde{\Sigma}$, whose complexification is
\begin{\eq}
(T^*{\Sigma}\otimes\mathbb{C})\otimes ({P}_{K}\times_{Ad_{K}}\mathfrak{p}\otimes\mathbb{C})
&=&(K\oplus\bar{K})\otimes ({P}_G\times_{Ad_G}\mathfrak{g})\\&=&(K\oplus\bar{K})\otimes End_0(E)\end{\eq}where $End_0(E)$ is the trace-free endomorphism bundle of $E$.

The harmonicity of the map $f$ assures that the pair $(d_A^{0,1}, (f^*\omega^{\mathfrak{p}})^{1,0})$ is a $G$-Higgs bundles over $\widetilde\Sigma$. By the equivariance of $f$, the Higgs bundle descends to $\Sigma$. Since the flat connection on $P_G$ is from the Maurer-Cartan form $\omega$, by comparing these two decomposition, the harmonic map associated to this Higgs bundle coincides with the original one. 

In this way, we obtain a homeomorphism $$\mathcal{M}_{Higgs}(G)\cong Rep(\pi_1, G),$$ i.e. the non-abelian Hodge correspondence. 

\subsection{Harmonic maps in terms of Higgs field}
By the above discussion, with the form $\omega^{\mathfrak p}\in \Omega^1(G/K, G\times_{Ad_K}\mathfrak{p})$, the conclusion is that: \begin{eqnarray*}(f^*\omega^{\mathfrak{p}})^{1,0}=\pi^*\phi\in \Omega^{1,0}(\widetilde\Sigma, End_0(E)),\\
 (f^*\omega^{\mathfrak{p}})^{0,1}=\pi^*\phi^{*_H}\in \Omega^{0,1}(\widetilde\Sigma, End_0(E)).\end{eqnarray*}
The rescaled Killing form $B(X,Y)=\frac{12}{n(n^2-1)}\cdot\text{tr}(XY)$ on $\mathfrak{g}$  induces a Riemannian metric $\tilde{B}$ on $G/K$: for two vectors $Y_1,Y_2\in T_p(G/K)$, $$\ti B(Y_1,Y_2)=B(\omega^{\mathfrak{p}}(Y_1),\omega^{\mathfrak{p}}(Y_2)).$$ We will later see in Equation (\ref{Fuchsian}) that this rescaled Killing form is such that in the base $n$-Fuchsian case, the equivariant harmonic map is a totally geodesic isometric embedding of $(\widetilde S,\widetilde g_0)$ inside $G/K$. Pulling back the metric to $\widetilde\Sigma$: for any two vectors $\tilde{X},\tilde{Y}\in T\tilde{\Sigma}$,
\begin{\eq}
f^*g_{G/K}(\tilde X,\tilde Y)=\tilde{B}(f_*(\tilde{X}),f_*(\tilde{Y}))=B(\omega^{\mathfrak{p}}(f_*(\tilde{X})),\omega^{\mathfrak{p}}(f_*(\tilde{Y})).
\end{\eq}

Since $f$ is $\rho$-equivariant and $\tilde{B}$ is $G$-invariant, $f^*g_{G/K}$ also descends to $\Sigma$. From now on, we won't distinguish notations on $\Sigma$ and $\widetilde \Sigma$. So the pullback metric $f^*g_{G/K}$ on $\Sigma$ is $$\frac{12}{n(n^2-1)}(\text{tr}(\phi^2)+2 Re(\text{tr}(\phi\phi^{*_H}))+\text{tr}(\phi^{*_H}\phi^{*_H})).$$ The Hopf differential of $f$ is$$\text{Hopf}(f)=g_f^{2,0}=\frac{12}{n(n^2-1)}\cdot\text{tr}(\phi^2).$$

Now let us derive the exact formula of the energy density $e(f)=\frac{1}{2}||df||^2,$ the square norm of $df$ as a section of $T^*\tilde S\otimes f^{-1}T(G/K)$. The hyperbolic metric $g_0$ on $S$ also gives a Hermitian metric $h$ on $K_{\Sigma}^{-1}$ satisfying $g=2Re(h)$. In local coordinate $z$, denote the local function $\hat g_0=g_0(\partial_{z},\partial_{\bar z})=h(\partial_{z},\partial_{z})$. And $\hat g_0$ satisfies $$\partial_{\bar z}\partial_z\log \hat g_0=\hat g_0,$$ since the curvature formula is $K_{g_0}=-\frac{1}{\hat g_0}\partial_{\bar z}\partial_z\log \hat g_0$, independent of choices of coordinates. Locally, 
$$g_0=2 Re (\hat g_0 dz\otimes d\bar z)=\hat g_0(dz\otimes d\bar z+d\bar z\otimes dz)=2\hat g_0(dx^2+dy^2).$$

Locally, the energy density \begin{eqnarray*}
e(f)&=&\frac{1}{2}||df||^2=\frac{1}{4\hat g_0}(\tilde B(f_*(\partial_x),f_*(\partial_x))+\tilde B(f_*(\partial_y),f_*(\partial_y)))\\
&&\text{Set $\Phi=\phi+\phi^{*_H}$.}\\
&=&\frac{1}{4\hat g_0}(B(\Phi(\partial_x),\Phi(\partial_x))+B(\Phi(\partial_y),\Phi(\partial_y)))\\
&&\text{Denote $\phi=\hat \phi dz$ and $\phi^{*_H}=\hat\phi^{*_H}d\bar z$.}\\
&=&\frac{1}{4\hat g_0}(B(\hat\phi+\hat\phi^{*_H},\hat\phi+\hat\phi^{*_H})+B(i(\hat\phi-\hat\phi^{*_H}),i(\hat\phi-\hat\phi^{*_H}))\\
&=&\frac{1}{\hat g_0}B(\hat\phi,\hat\phi^{*_H})=\frac{1}{\hat g_0}\text{tr}(\hat\phi\hat\phi^{*_H}).
\end{eqnarray*} Hence globally,
$$2Re(\text{tr}(\phi\phi^{*_H}))=e(f)\cdot g_0.$$

\section{Higgs bundles in Hitchin component}\label{Hitchin}
In the first part of this section, we explain Higgs bundle description of Hitchin representations in terms of holomorphic differentials $\bigoplus\limits_{i=2}^n H^0(K^i)$. In the second part, we introduce a new expression of Higgs bundle in Hitchin component behaving nicely with respect to the harmonic metric. Lastly, we deduce the Hitchin equation using the new Higgs bundle expression. 

\subsection{Hitchin fibration and Hitchin section} Let us explain Hitchin's parametrization in \cite{Hitchin92} and refer to \cite{Bar} for details. Choose any principal $3$-dimensional subalgebra $\mathfrak s=span\{\tilde e_1,x,e_1\}$ in $\mathfrak g=sl(n,\mathbb C)$ consisting of a semisimple element $x$, regular nilpotent elements $\tilde e_1$ and $e_1$ satisfying $$[x,e_1]=e_1,\quad [x,\tilde e_1]=-\tilde e_1,\quad  [e_1,\tilde e_1]=x.$$  Moreover, we require $\mathfrak s$ to be real with respect to a compact real form $\rho$ of $\mathfrak g$, that is, 
$$\rho(x)=-x,\quad \rho(e_1)=-\tilde e_1.$$ The adjoint representation of $\mathfrak s$ decomposes $\mathfrak g$ into a direct sum of irreducible representations $V_i$ of dimension $2i+1$ for $1\leq i\leq n-1$ and $V_1$ is just $\mathfrak s$ itself. Let $e_i\in V_i$ is the eigenvector of $ad(x)$ of the highest weight, which is $i$.

Consider now elements $f=\tilde e_1+\alpha_2 e_1+\cdots+\alpha_n e_{n-1}\in \mathfrak g$, there exist invariant polynomials $p_i, i=2,\cdots,n$ of degree $i$ on $\mathfrak g$ such that $p_i(f)=\alpha_i$. The Hitchin fibration is then defined as a map from the moduli space of $SL(n,\mathbb{C})$-Higgs bundles over $\Sigma$ to the direct sum of the holomorphic differentials
\begin{\eq}
h:M_{Higgs} &\longrightarrow&\bigoplus\limits_{j=2}^nH^0(\Sigma, K^j)\ni (q_2,q_3,\cdots,q_n)\\
(E,\phi)&\longmapsto& (p_2(\phi),p_3(\phi),\cdots,p_n(\phi)).\end{\eq} 

We can define a section of the Hitchin fibration as follows: for a tuple $(q_2,\cdots,q_n)\in \bigoplus\limits_{j=2}^nH^0(\Sigma, K^j)$, define its section as $$E=K^{\frac{n-1}{2}}\oplus K^{\frac{n-3}{2}}\oplus\cdots\oplus K^{\frac{1-n}{2}},\quad \phi=\tilde e_1+q_2e_1+q_3e_2+\cdots+q_ne_n,$$ where $(q_2,q_3,\cdots,q_n)\in\bigoplus\limits_{i=2}^n H^0(K^i)$. By Htichin \cite{Hitchin92}, such Higgs bundles are stable and have holonomy inside a copy of $SL(n,\mathbb R)$ inside $SL(n,\mathbb C)$. Under the non-abelian Hodge correspondence, the Hitchins section correponds to the Hitchin component in the representation variety for $PSL(n,\mathbb R)$. Different choices of principal $3$-dimensional subalgebras give gauge equivalent Higgs bundles.  \\

Now we will write the Higgs bundles in Hitchin component in matrix form explicitly. 

Choose the compact real form on $sl(n,\mathbb C)$ as $\rho(X)=-\overline{X}^T$. Choose the Cartan subalgebra as the trace-free diagonal matrices $\mathfrak{h}\ni H=diag(t_1,\cdots,t_n)$ and a positive Weyl chamber in $\mathfrak t$ consisting of $H$ satisfying $t_i>t_{i+1}$. The root space $\triangle$, positive root space $\triangle^+$, and simple root space $\Pi$ are 
\begin{eqnarray*}&&\triangle=\{\alpha_{ij}\in \mathfrak h^*, i\neq j| \alpha_{ij}(H)=t_i-t_j, H\in \mathfrak h\},\\
&&\triangle^+=\{\alpha_{ij}\in\triangle, i>j\},\quad \Pi=\{\alpha_{i,i+1}\in \triangle\}.\end{eqnarray*}

Let $E_{ij}$ be a $n\times n$ matrix such that its $(i,j)$-entry is $1$ and $0$ elsewhere. Denote by $<,>$ the killing form of $\mathfrak g$ and the same symbol for its restriction to $\mathfrak h$ and also its dual extension to $\mathfrak h^*$. For each root $\alpha\in \triangle,$ the coroot $h_{\alpha}\in \mathfrak h$ is defined as $<h_{\alpha},u>=\frac{2}{<\alpha,\alpha>}\alpha(u)$ for $u\in \mathfrak h$. So the coroot of $\alpha_{ij}$ is $h_{\alpha_{ij}}=E_{ii}-E_{jj}$, and its root space is spanned by $x_{\alpha_{ij}}=E_{ij}$. We then choose $\tilde e_1,x,e_1$ as follows,
\begin{eqnarray*}
&&x=\frac{1}{2}\sum_{\alpha\in \triangle^{+}} h_{\alpha}=\sum_{\alpha_{i,i+1}\in \Pi}r_ih_{\alpha_{i,i+1}}=\begin{pmatrix}
\frac{n-1}{2}&&&&\\
&\frac{n-3}{2}&&&\\
&&\ddots&&\\
&&&\frac{3-n}{2}&\\
&&&&\frac{1-n}{2}
\end{pmatrix},\\
&&e_1=\sum_{\alpha_{i,i+1}\in\Pi}\sqrt{r_i}x_{\alpha_{i,i+1}}=\begin{pmatrix}
0&\sqrt{r_1}&&&\\
&0&\sqrt{r_2}&&\\
&&\ddots&\ddots&\\
&&&0&\sqrt{r_{n-1}}\\
&&&&0\end{pmatrix},\\
&&\tilde e_1=\sum_{\alpha_{i,i+1}\in \Pi}\sqrt{r_i}x_{-\alpha_{i,i+1}}=\begin{pmatrix}
0&&&&\\
\sqrt{r_1}&0&&&\\
&\sqrt{r_2}&0&&\\
&&\ddots&\ddots&\\
&&&\sqrt{r_{n-1}}&0\end{pmatrix},
\end{eqnarray*}
where $r_i=\frac{i(n-i)}{2}$. 

One can check that $\mathfrak s=span\{\tilde e_1,x,e_1\}$ is a principal $3$-dimensional subalgebra which is real with respect to the compact real form $\rho$.
 As a generalization of $e_1$, the vector $$e_i=\sum_{k=1}^{n-i}(\prod_{j=k}^{k+i-1}\sqrt{r_j})\cdot E_{k,k+i}$$ is the eigenvector of $ad(x)$ of eigenvalue $i$ and satisfies $[e_1,e_i]=0$. The vector space generated by $\mathfrak s$ and $e_i$ is of dimension $2i+1$ in which $e_i$ is the eigenvector of highest weight. 

With respect to $\mathfrak s$, the Higgs bundle in Hitchin component corresponding to $(q_2,\cdots,q_n)$ is of the form: $E=K^{\frac{n-1}{2}}\oplus K^{\frac{n-3}{2}}\oplus\cdots\oplus K^{\frac{1-n}{2}}$,
\begin{eqnarray}
&&\phi=\tilde e_1+q_2e_1+q_3e_2+\cdots+q_ne_{n-1}\nonumber\\
&&=\begin{pmatrix}0&\sqrt{r_1}q_2&\sqrt{r_1r_2}q_3&\cdots&\prod_{i=1}^{n-2}\sqrt{r_i}q_{n-1}&\prod_{i=1}^{n-1}\sqrt{r_i}q_n\\\sqrt{r_1}&0&\sqrt{r_2}q_2&\cdots&\cdots&\prod_{i=2}^{n-1}\sqrt{r_i}q_{n-1}\\&\sqrt{r_2}&0&\sqrt{r_3}q_2&\cdots&\vdots\\ &&\ddots&\ddots&\ddots&\vdots\\ &&&0&\sqrt{r_{n-2}}q_2&\sqrt{r_{n-2}r_{n-1}}q_3\\&&&\sqrt{r_{n-2}}&0&\sqrt{r_{n-1}}q_2\\ &&&&\sqrt{r_{n-1}}&0\end{pmatrix}.\nonumber
\end{eqnarray} 

\begin{rem}
In the case $(q_2,\cdots,q_n)=(q_2,0,\cdots,0)$, the corresponding representation is $n$-Fuchsian representation. In the case $(q_2,\cdots,q_n)=(0,\cdots,0)$, it is the base $n$-Fuchsian case of $\Sigma$.
\end{rem}

For our convenience later, we will work with a gauge equivalent Higgs bundle with the above expression under the gauge transformation 
$$g=\begin{pmatrix}
1&&&&\\
&\sqrt{r_1}&&&\\
&&\sqrt{r_1r_2}&&\\
&&&\ddots&\\
&&&&\prod_{i=1}^{n-1}\sqrt{r_i}
\end{pmatrix}.$$
 The holomorphic structure on $E$ does not change and the Higgs field $\phi$ becomes $g^{-1}\phi g$. So the Higgs bundle in Hitchin component is of the following form: $E=K^{\frac{n-1}{2}}\oplus K^{\frac{n-3}{2}}\oplus\cdots\oplus K^{\frac{1-n}{2}},$ and
\begin{equation}\label{expression}
\phi=\begin{pmatrix}0&r_1q_2&r_1r_2q_3&r_1r_2r_3q_4&\cdots&\prod_{i=1}^{n-2}r_iq_{n-1}&\prod_{i=1}^{n-1}r_iq_n\\1&0&r_2q_2&r_2r_3q_3&\cdots&\cdots&\prod_{i=2}^{n-1}r_iq_{n-1}\\&1&0&r_3q_2&r_3r_4q_3&\cdots&\vdots\\ &&\ddots&\ddots&\ddots&\ddots&\vdots\\ &&&1&0&r_{n-2}q_2&r_{n-2}r_{n-1}q_3\\&&&&1&0&r_{n-1}q_2\\ &&&&&1&0\end{pmatrix}，
\end{equation}
where $r_i=\frac{i(n-i)}{2}$.

\subsection{A smooth orthogonal decomposition of the bundle} The Higgs bundle in Hitchin component in the above format is explicit. Since $(E,\phi)$ is stable, there is a unique harmonic metric $H$. The above Higgs bundle expression (\ref{expression}) has a disadvantage that the holomorphic decomposition of $E$ is not an orthogonal decomposition with respect to $H$. To achieve an orthogonal splitting of the bundle, we sacrifice the holomorphic splitting of the bundle and the explicit expression of Higgs bundle in terms of holomorphic differentials. However, we still have some control on the holomorphic structure and Higgs field by carefully choosing the orthogonal splitting of the bundle $E$.

The new decomposition goes as follows. We start with $(E,\phi)$ in the expression $(\ref{expression})$.  The bundle $E$ is a direct sum of holomorphic line bundles $K^{\frac{n-1}{2}}\oplus K^{\frac{n-3}{2}}\oplus\cdots\oplus K^{\frac{1-n}{2}}$. Define $F_k=\bigoplus\limits_{i=1}^k K^{\frac{n+1-2i}{2}}$ to be the direct sum of the first $k$ line bundles. Therefore $E$ admits a holomorphic filtration 
$$0=F_0\subset F_1\subset \cdots F_{n-1}\subset F_n=E.$$ We can equip each line bundle $F_k/F_{k-1}$ with the quotient holomorphic structure. Then\\
(1) the natural inclusion $K^{\frac{n+1-2k}{2}}\subset F_k$ induces an isomorphism between the holomorphic line bundle $K^{\frac{n+1-2k}{2}}$ and $F_k/F_{k-1}$ equipped with the quotient holomorphic structure;\\
(2) the Higgs field $\phi$ takes $F_k$ to $F_{k+1}\otimes K$ and the induced map from $F_k/F_{k-1}\rightarrow F_{k+1}/F_k\otimes K$ is the constant map $1: K^{\frac{n+1-2k}{2}}\rightarrow K^{\frac{n+1-2(k+1)}{2}}\otimes K$ under the isomorphism between $F_k/F_{k-1}$ with $K^{\frac{n+1-2k}{2}}.$\\

For $1\leq k\leq n$, take $L_k$ as the $H$-orthogonal line bundle inside $F_k$ with respect to $F_{k-1}$.  So the inclusion map $L_k\subset F_k$ induces an isomorphism between $L_k$ with the quotient line bundle $F_k/F_{k-1}$. Then $L_k$ is equipped with the pullback quotient holomorphic structure of $F_k/F_{k-1}$. In this way, $E$ has a $C^{\infty}$-decomposition as $L_1\oplus L_2\oplus \cdots \oplus L_n$, where $L_k$ is a holomorphic line bundle which is isomorphic to $K^{\frac{n+1-2k}{2}}$. Note that $L_i$ is not a holomorphic subbundle of $E$ except $k=1$.

Let us record some important information about the decomposition from the construction:\\
(i) the decomposition is orthogonal with respect to $H$;\\
(ii) $\bar\partial _E$ preserves $\bigoplus\limits_{i=1}^k L_i$, since $F_k=\bigoplus\limits_{i=1}^k L_i$ is a holomorphic subbundle of $E$;\\
(iii) $\phi$ take $\bigoplus\limits_{i=1}^k L_i$ to $(\bigoplus\limits_{i=1}^{k+1}L_i)\otimes K$ and the induced map from $$L_k\xrightarrow{\text{$\phi$}} F_{k+1}\otimes K\xrightarrow{\text{pr}_{k+1}} L_{k+1}\otimes K$$ is the constant map $1: K^{\frac{n+1-2k}{2}}\rightarrow K^{\frac{n+1-2(k+1)}{2}}\otimes K$ under the isomorphism between $L_k$ with $K^{\frac{n+1-2k}{2}}.$\\

More explicitly, with respect to the smooth decomposition $$E=L_1\oplus L_2\oplus\cdots\oplus L_n,\quad L_k=K^{\frac{n+1-2k}{2}},$$ we have:

I. the Hermitian metric $H$ solving the Hitchin equation is given by $$H=\begin{pmatrix}h_1&&&\\&h_2&&\\&&\ddots&\\&&&h_n\end{pmatrix}$$ 
where $h_i$ is the Hermitian metric on $L_i$ and under the isomorphism $det(E)\cong \mathcal{O}$, $det(H)$ is a constant metric $1$ on the line bundle $det(E)$ which is trivial;

II. the holomorphic structure on $E$ is given by the $\bar\partial$-operator \begin{eqnarray*}
\bar\partial_E=\begin{pmatrix}\bar\partial_1&\beta_{12}&\beta_{13}&\cdots&\beta_{1n}\\&\bar\partial_2&\beta_{23}&\cdots&\beta_{2n}\\&&\bar\partial_3&\cdots&\beta_{3n}\\&&&\ddots&\vdots\\&&&&\bar\partial_n\end{pmatrix}
\end{eqnarray*} where $\bar\partial_k$ are $\bar\partial$-operators defining the holomorphic structures on $L_k$, and $\beta_{ij}\in A^{0,1}(Hom(L_j,L_i))$;

III. the Higgs field is of the form \begin{eqnarray*}
&&\phi=\begin{pmatrix}a_{11}&a_{12}&a_{13}&\cdots&a_{1n}\\1_1&a_{22}&a_{23}&\cdots&a_{2n}\\&1_2&a_{33}&\cdots&a_{3n}\\&&\ddots&\ddots&\vdots\\&&&1_{n-1}&a_{nn}\end{pmatrix}
\end{eqnarray*} where $a_{ij}\in A^{1,0}(Hom(L_j,L_i))$ and $1_l$ is the constant map $1:K^{\frac{n+1-2l}{2}}\rightarrow K^{\frac{n+1-2(l+1)}{2}}\otimes K$ (the subscript $l$ will be useful later.)

\begin{rem} When the holomorphic decomposition is already orthogonal with respect to the harmonic metric $H$, then the new decomposition coincides with the old one. By the work of Baraglia \cite{Bar} and Collier \cite{BrianThesis}, there are several families of Higgs bundles in Hitchin component such that $H$ splits on the holomorphic splitting:\\
(1) cyclic case: $(0,\cdots,0,q_n)$;\\
(2) sub-cyclic case: $(0,\cdots,0,q_{n-1},0);$ and \\
(3) Fuchsian case: $(q_2,0,\cdots,0)$.  \\
In particular if $(E,\phi)$ is the base $n$-Fuchsian case of $\Sigma$, all the $\beta_{ij}, a_{ij}$ vanish. 
\end{rem}

\subsection{Hitchin equation in terms of orthogonal decomposition}
We are going to describe $\nabla^H$ and $\phi^{*_H}$ as follows. 

The Chern connection of the harmonic metric $H$ is 
\begin{eqnarray*}
\nabla^H=\begin{pmatrix}\nabla^{h_1}&\beta_{12}&\beta_{13}&\cdots&\beta_{1n}\\-\beta_{12}^*&\nabla^{h_2}&\beta_{23}&\cdots&\beta_{2n}\\-\beta_{13}^*&-\beta_{23}^*&\nabla^{h_3}&\cdots&\beta_{3n}\\\vdots&\vdots&\ddots&\ddots&\vdots\\-\beta_{1n}^*&-\beta_{2n}^*&-\beta_{2n}^*&\cdots&\nabla^{h_n}\end{pmatrix},
\end{eqnarray*}where $\nabla^{h_l}$ is Chern connection on each $L_l$, $\beta_{ij}^*$ is the Hermitian adjoint of $a_{ij}$, that is, $h_i(\beta_{ij}(e_j), e_i)=h_j(e_j, \beta_{ij}^*(e_i))$ for a local section $e_i, e_j$ of $L_i,L_j$ respectively. In local frame, $\beta_{ij}^*=\bar\beta_{ij}\cdot h_ih_j^{-1}.$

The Hermitian adjoint of the Higgs field with respect to $H$ is 
\begin{eqnarray*}
\phi^{*_H}=\begin{pmatrix}a_{11}^*&1_1^*&&&&\\a_{12}^*&a_{22}^*&1_2^*&&&\\a_{13}^*&a_{23}^*&a_{33}^*&\ddots&&\\\vdots&\vdots&\vdots&\ddots&1_{n-1}^*\\a_{1n}^*&a_{2n}^*&a_{3n}^*&\cdots&a_{nn}^*
\end{pmatrix},
\end{eqnarray*}
where $a_{ij}^*$ is the Hermitian adjoint of $a_{ij}$, that is, $h_i(a_{ij}(e_j), e_i)=h_j(e_j, a_{ij}^*(e_i))$ for a local section $e_i, e_j$ of $L_i,L_j$ respectively. Similarly, $1_l^*$ is the Hermitian adjoint of $1_l\in H^0(Hom(L_{l+1},L_l)\otimes K)$. In local frame, $a_{ij}^*=\bar a_{ij}\cdot h_ih_j^{-1}, 1_l^*=h_l^{-1}h_{l+1}.$

For $1\leq l\leq n$, the $(l,l)$-entry of the Hitchin equation $$\nabla^H\circ \nabla^H+[\phi,\phi^{*_H}]=0$$ is 
\begin{equation}\label{Firstequation}
F^{\nabla_{h_l}}-\sum\limits_{j=l+1}^n\beta_{lj}\wedge\beta^*_{lj}-\sum\limits_{j=1}^{l-1}\beta_{jl}^*\wedge\beta_{jl}+\sum\limits_{j=l+1}^na_{lj}\wedge a_{lj}^*+\sum\limits_{j=1}^{l-1}a_{lj}^*\wedge a_{lj}+1_{l-1}\wedge 1_{l-1}^*+1_l^*\wedge 1_l=0.
\end{equation}
In the case $l=1$, $1_{l-1}\wedge 1_{l-1}^*$ does not appear.

In a local coordinate $z$, choose a local holomorphic frame $s_l=dz^{\frac{n+1-2l}{2}}$ of $L_l$. Denote the local function $h_l(s_l,s_l)$ as $\hat h_l$. Then locally, $F^{\nabla h_l}=(\partial_{\bar z}\partial_z\log\hat h_l)d\bar z\wedge dz$. Getting rid of the $2$-form $d\bar z\wedge dz$, Equation (\ref{Firstequation}) is locally
\begin{equation}
\partial_{\bar z}\partial_z\log\hat h_l-\sum\limits_{j=l+1}^n(|\beta_{lj}|^2+|a_{lj}|^2)\hat h_l\hat h_j^{-1}+\sum\limits_{j=1}^{l-1}(|\beta_{jl}|^2+|a_{jl}|^2)\hat h_j\hat h_l^{-1}-\hat h_{l-1}^{-1}\hat h_l+\hat h_l^{-1}\hat h_{l+1}=0.
\end{equation}
In the case $l=1$, $\hat h_{l-1}^{-1}\hat h_l$ does not appear.

Let $g_0$ be the conformal hyperbolic metric on $\Sigma$ of constant curvature $-1$. By complexification, it also gives a Hermitian metric $h$ on $K^{-1}$. In local coordinate $z$, $g_0=2\hat g_0(dx^2+dy^2), h=\hat g_0dz\otimes d\bar z$. Locally $\hat g_0$ satisfies $$\partial_{\bar z}\partial_z\log \hat g_0=\hat g_0.$$ 

Since $h_l$ and $h^{-\frac{n+1-2l}{2}}$ are both two Hermitian metric on $L_l$, they differ by a positive function over the surface, i.e. $h_l=h^{-\frac{n+1-2l}{2}}\cdot e^{u_l}$, for some smooth function $u_l$ over $\Sigma$. Then locally, $u_l$ satisfies
\begin{eqnarray*}
\frac{1}{\hat g_0}\partial_{\bar z}\partial_z u_l-\sum\limits_{j=l+1}^n(|\beta_{lj}|^2+|a_{lj}|^2)\hat h_l\hat h_j^{-1}\hat g_0^{j-l-1}+\sum\limits_{j=1}^{l-1}(|\beta_{jl}|^2+|a_{jl}|^2)\hat h_j\hat h_l^{-1}\hat g_0^{j-l-1}\\
-\hat h_{l-1}^{-1}\hat h_l+\hat h_l^{-1}\hat h_{l+1}-\frac{n+1-2l}{2}=0.
\end{eqnarray*}

When $(E,\phi)$ corresponds to the base $n$-Fuchsian reresentation, all the $\beta_{jl}, a_{lj}$ vanish. Denote the Hermitian metric on $L_l$ as $\tilde h_l$. Then $\tilde h_l=h^{-\frac{n+1-2l}{2}}\cdot e^{\tilde u_l}$, for a constant $\tilde u_l$ satisfying $e^{-\tilde u_l+\tilde u_{l+1}}=\frac{(n-l)l}{2}$. The pullback metric of corresponding harmonic map $\tilde f$ is
\begin{eqnarray}\label{Fuchsian}\tilde f^*g_{G/K}=2Re(\frac{12}{n(n^2-1)}\cdot tr(\phi\phi^{*_H}))=2 Re(\frac{12}{n(n^2-1)}\cdot \sum_{l=1}^{n-1}\tilde h_l^{-1}\tilde h_{l+1})\nonumber\\
=2Re(\frac{12}{n(n^2-1)}\cdot \sum_{l=1}^{n-1}\frac{(n-l)l}{2}\cdot h)=2Re(h)=g_0.\end{eqnarray}
Hence in this case, the energy density $e(\tilde f)\equiv 1$.\\

Denote $\triangle_{g_0}=\frac{1}{\hat g_0}\partial_{\bar z}\partial_z$, which is globally well-defined on the surface. Denote $||\beta_{lj}||^2=|\beta_{lj}|^2\hat h_l\hat h_j^{-1}\hat g_0^{j-l-1}$ and $||a_{lj}||^2=|a_{lj}|^2\hat h_l\hat h_j^{-1}\hat g_0^{j-l-1}$, which are globally well-defined functions on the surface. So the equation of $u_l$ holds globally. 
\begin{equation}
\triangle_{g_0} u_l -\sum\limits_{j=l+1}^n(||\beta_{lj}||^2+||a_{lj}||^2)+\sum\limits_{j=1}^{l-1}(||\beta_{jl}||^2+||a_{jl}||^2)-e^{u_l-u_{l-1}}+e^{u_{l+1}-u_l}-\frac{n+1-2l}{2}=0.
\end{equation}
In the case $l=1$, $e^{u_l-u_{l-1}}$ does not appear.

Denote $z_l=u_l-\tilde u_l$, measuring how far $h_l$ is from $\tilde h_l$, the base $n$-Fuchsian case. So the function $z_l$ satisfies 
\begin{eqnarray}\label{firstmainequation}
\triangle_{g_0} z_l -\sum\limits_{j=l+1}^n(||\beta_{lj}||^2+||a_{lj}||^2)+\sum\limits_{j=1}^{l-1}(||\beta_{jl}||^2
+||a_{jl}||^2)-e^{z_l-z_{l-1}}\cdot \frac{(l-1)(n+1-l)}{2}\nonumber\\+e^{z_{l+1}-z_l}\cdot \frac{l(n-l)}{2}-\frac{n+1-2l}{2}=0.
\end{eqnarray}
In the case $l=1$, $e^{z_l-z_{l-1}}$ does not appear.

\section{Proof of main theorem}\label{proof}
We proceed to prove the main theorem in this section. We begin with Lemma \ref{estimate} estimating how far the Hermitian metric $h_l$ is from $\tilde h_l=g_0^{-\frac{n+1-2l}{2}}\cdot e^{\tilde u_l}$ (the base $n$-Fuchsian case) on the line bundle $L_l$, i.e. the function $z_l$ such that $h_l=\tilde h_l\cdot e^{z_l}$. 

Equation (\ref{firstmainequation}) of the function $z_l$ is difficult to directly estimate since the unknown terms $||\beta_{ij}||^2, ||a_{ij}||^2$ do not appear in the same sign. The main observation is that the sum of the first $k$ equations turns out to have the same sign surprisingly.
\begin{lem}\label{estimate}
Let $v_k=\sum\limits_{l=1}^kz_l$, if $\rho$ is not the base $n$-Fuchsian,
\begin{equation*}
v_k< 0, \quad\text{for $1\leq k\leq n-1$, and}\quad v_n\equiv 0.
\end{equation*}
\end{lem}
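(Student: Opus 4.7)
The plan is to sum equation (\ref{firstmainequation}) over $l = 1, \ldots, k$ to obtain a clean PDE for $v_k$, then run a maximum-principle argument. Three combinatorial collapses occur in the summation: (i) the off-diagonal norm terms $\|\beta_{lj}\|^2, \|a_{lj}\|^2$ with both indices in $\{1,\ldots,k\}$ cancel pairwise, leaving only the ``split'' terms with $l \leq k < j$; (ii) the exponential terms telescope via the substitution $l' = l-1$ in the identity $(l-1)(n+1-l) = l'(n-l')$, reducing to the single residual $\tfrac{k(n-k)}{2} e^{z_{k+1}-z_k}$; (iii) the linear terms sum as $\sum_{l=1}^k (n+1-2l) = k(n-k)$. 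For $1 \leq k \leq n-1$, the result is
\begin{equation*}
\triangle_{g_0} v_k \;=\; F_k + \tfrac{k(n-k)}{2}\bigl(1 - e^{z_{k+1}-z_k}\bigr), \qquad F_k := \sum_{1 \leq l \leq k < j \leq n}\bigl(\|\beta_{lj}\|^2 + \|a_{lj}\|^2\bigr) \geq 0,
\end{equation*}
while the same summation for $k = n$ gives $\triangle_{g_0} v_n = 0$. The identity $v_n \equiv 0$ then follows from $\prod h_l = \prod \tilde h_l = 1$ on the trivial determinant bundle $\det E \cong \mathcal{O}$, which gives $\sum u_l = \sum \tilde u_l = 0$.

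For the upper bound $v_k \leq 0$, set $V := \max_{1 \leq k \leq n-1} v_k$ and $W := \sup_{\Sigma} V$, and assume $W \geq 0$ for contradiction. At a maximizer $p_0$ with $v_{k_0}(p_0) = W$, $v_{k_0}$ has a local maximum, so $\triangle_{g_0} v_{k_0}(p_0) \leq 0$; combined with $F_{k_0}(p_0) \geq 0$, the PDE forces $z_{k_0+1}(p_0) \geq z_{k_0}(p_0)$. Because $v_{k_0+1}(p_0) \leq W = v_{k_0}(p_0)$ (with convention $v_n \equiv 0$), we get $z_{k_0+1}(p_0) \leq 0$, hence $z_{k_0}(p_0) \leq 0$, which forces $v_{k_0-1}(p_0) \geq W$ and thus $= W$. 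Iterating downward yields $v_k(p_0) = W$ for every $1 \leq k \leq k_0$. Applying the same inequality at $k = 1$ then gives $z_2(p_0) \geq z_1(p_0) = W$, while $z_2(p_0) = v_2(p_0) - W \leq 0$; hence $W \leq 0$, so $W = 0$ and $v_k \leq 0$ for all $k$.

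For the strict inequality when $\rho$ is not base $n$-Fuchsian, suppose $v_{k_0}(p_0) = 0$ for some $k_0$ and $p_0$. To globalize from this single point, rewrite $1 - e^{z_{k+1}-z_k} = -g_k(v_{k+1} - 2v_k + v_{k-1})$ with the smooth positive function $g_k := \int_0^1 e^{t(z_{k+1}-z_k)}\,dt$. The PDE becomes
\begin{equation*}
\triangle_{g_0} v_k - k(n-k) g_k \, v_k \;=\; F_k - \tfrac{k(n-k)}{2} g_k (v_{k+1} + v_{k-1}) \;\geq\; 0,
\end{equation*}
since $F_k \geq 0$ and $v_{k\pm1} \leq 0$. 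This is an elliptic differential inequality for $v_k$ with non-positive zero-order coefficient, so Hopf's strong maximum principle applies: $v_{k_0}$ attains its non-negative maximum $0$ at the interior point $p_0$, hence $v_{k_0} \equiv 0$ on $\Sigma$. Plugging back in, the original PDE at $k_0$ reads $F_{k_0} + \tfrac{k_0(n-k_0)}{2}\bigl(1 - e^{v_{k_0+1}+v_{k_0-1}}\bigr) \equiv 0$; both summands are non-negative, so $F_{k_0} \equiv 0$ and $v_{k_0+1} + v_{k_0-1} \equiv 0$, which together with $v_{k_0 \pm 1} \leq 0$ yields $v_{k_0 \pm 1} \equiv 0$. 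Iterating to neighbors, $v_k \equiv 0$ and $F_k \equiv 0$ for all $k$, so all $\beta_{lj}, a_{lj} \equiv 0$; thus $\phi$ reduces to the pure subdiagonal of ones and the holomorphic splitting is $H$-orthogonal, i.e., $\rho$ is the base $n$-Fuchsian representation of $(S, g_0)$.

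The main obstacle is the coupling of the PDEs through $z_{k+1} - z_k = v_{k+1} - 2v_k + v_{k-1}$: no single $v_k$ admits an autonomous elliptic inequality a priori. The cascade is what converts the local maximum of a single $v_{k_0}$ into simultaneous maxima of all $v_k$'s with the same value, which finally activates the boundary conditions $v_0 = v_n = 0$ to pin $W$ and, in the equality case, sets up the strong maximum principle.
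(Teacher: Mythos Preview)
Your proof is correct and follows essentially the same strategy as the paper: sum equation~(\ref{firstmainequation}) to obtain the key inequality $\triangle_{g_0}v_k+(e^{v_{k-1}+v_{k+1}-2v_k}-1)\tfrac{k(n-k)}{2}\geq 0$, then run a maximum principle. The only differences are cosmetic: for the non-strict bound the paper picks the \emph{largest} index $k_0$ achieving the overall maximum and gets an immediate contradiction from $v_{k_0+1}(p)=M$, whereas you cascade downward to $k=1$; for strictness the paper compares $v_k$ directly with the supersolution $0$ of $\triangle u+(e^{-2u}-1)\tfrac{k(n-k)}{2}=0$, while you linearize the exponential to put the inequality in the form $\triangle v_k-c\,v_k\geq 0$ before invoking Hopf---both are the same strong maximum principle step. (One wording quibble: ``assume $W\geq 0$ for contradiction'' is not a contradiction argument---you actually show that if $W\geq 0$ then $W=0$; the case $W<0$ is trivial.)
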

\begin{proof}
$v_n\equiv 0$ follows directly from both $H$ and $\tilde H$ are of unit determinant. 

Summing up Equation (\ref{firstmainequation}) over $1\leq l\leq k$, we obtain
\begin{eqnarray}\label{mainequation}
\triangle_{g_0} (\sum\limits_{l=1}^kz_l) -\sum\limits_{l=1}^k\sum\limits_{j=l+1}^n(||\beta_{lj}||^2+||a_{lj}||^2)+\sum\limits_{l+1}^k\sum\limits_{j=1}^{l-1}(||\beta_{jl}||^2+||a_{jl}||^2)\\
+e^{-z_k+z_{k+1}}\cdot\frac{k(n-k)}{2}-\sum\limits_{l=1}^k\frac{n+1-2l}{2}=0.\nonumber
\end{eqnarray}

Step 1: We first simplify the above Equation (\ref{mainequation}).

The following formula is key to us.
\begin{equation}\label{formula}
 -\sum\limits_{l=1}^k\sum\limits_{j=l+1}^n(||\beta_{lj}||^2+||a_{lj}||^2)+\sum\limits_{l=1}^k\sum\limits_{j=1}^{l-1}(||\beta_{jl}||^2+||a_{jl}||^2)=-\sum\limits_{l=1}^k\sum\limits_{j=k+1}^n(||\beta_{lj}||^2+||a_{jl}||^2).
\end{equation}
It holds because \begin{eqnarray*}
LHS&=&-\sum\limits_{l=1}^k\sum\limits_{j=l+1}^n(||\beta_{lj}||^2+||a_{lj}||^2)+\sum\limits_{j=1}^{k-1}\sum\limits_{l=j+1}^{k}(||\beta_{jl}||^2+||a_{jl}||^2)\\
 &=&-\sum\limits_{l=1}^k\sum\limits_{j=l+1}^n(||\beta_{lj}||^2+||a_{lj}||^2)+\sum\limits_{l=1}^{k-1}\sum\limits_{j=l+1}^{k}(||\beta_{lj}||^2+||a_{lj}||^2)\\
 &=&-\sum\limits_{l=1}^k\sum\limits_{j=k+1}^n(||\beta_{lj}||^2+||a_{lj}||^2)=RHS.
\end{eqnarray*}

Using Formula (\ref{formula}) and $\sum\limits_{l=1}^k\frac{n+1-2l}{2}=
\frac{(n-k)k}{2},$ Equation (\ref{mainequation}) becomes 
\begin{eqnarray*}
\triangle_{g_0} (\sum\limits_{l=1}^kz_l)-\sum\limits_{l=1}^k\sum\limits_{j=k+1}^n(||\beta_{lj}||^2+||a_{lj}||^2)+(e^{-z_k+z_{k+1}}-1)\cdot\frac{(n-k)k}{2}=0.
\end{eqnarray*}
Set $v_k=\sum_{l=1}^k z_k$ for $1\leq k\leq n$ and $v_0=0$. Noting that $e^{-z_k+z_{k+1}}=e^{v_{k-1}+v_{k+1}-2v_{k}}$ for $1\leq k\leq n-1$, we rewrite the above equation as
\begin{equation}\label{prekeyequation}
\triangle_{g_0} v_k-\sum\limits_{l=1}^k\sum\limits_{j=k+1}^n(||\beta_{lj}||^2+||a_{lj}||^2)+(e^{v_{k-1}+v_{k+1}-2v_k}-1)\cdot\frac{(n-k)k}{2}=0.
\end{equation} Therefore
\begin{equation}\label{keyequation}
\triangle_{g_0} v_k+(e^{v_{k-1}+v_{k+1}-2v_k}-1)\cdot\frac{(n-k)k}{2}\geq 0.
\end{equation} \\

Step 2: We show $v_k\leq 0$ for all $1\leq k\leq n$. 

Set $M=\max\limits_{1\leq k\leq n} \max\limits_{p\in \Sigma}v_k(p)$. Since $v_n\equiv 0$, $M\geq 0$. It suffices to show $M=0$. Suppose not, i.e. $M>0$. Assume $k_0$ is the largest integer such that $v_{k_0}$ achieves $M$ at some point. So $k_0\leq n-1$.
Then at some maximum point $p$ of $v_{k_0}$, $\triangle_{g_0}v_{k_0}(p)\leq 0$, using Equation (\ref{keyequation}), we obtain
\begin{eqnarray*}
&&e^{(v_{k_0-1}+v_{k_0+1}-2v_{k_0})(p)}-1\geq 0
\Longrightarrow v_{k_0-1}(p)+v_{k_0+1}(p)\geq  \max_{\Sigma}2v_{k_0}=2M.
\end{eqnarray*}
By the definition of $M$ being maximum, $v_{k_0-1}+v_{k_0+1}\leq 2M$, hence $v_{k_0-1}(p)=v_{k_0+1}(p)=M$. This contradicts the choice of $k_0$ being the largest integer such that $v_{k_0}$ achieves $M$ at some point.\\

Step 3: We show that $v_k<0$ for all $1\leq k\leq n-1$.

For each $k$, apply $v_{k-1}, v_{k+1}\leq 0$ to Equation (\ref{keyequation}), we obtain
\begin{eqnarray*}
\triangle_{g_0} v_k+(e^{-2v_k}-1)\cdot\frac{(n-k)k}{2}\geq 0.
\end{eqnarray*} 
Observe that the constant function $0$ satisfies $$\triangle_{g_0} (0)+(e^{-2\times 0}-1)\frac{(n-k)k}{2}= 0.$$ By the strong maximum principle (see \cite{Jost}, page 43), $v_k<0$ or $v_k\equiv 0$. In fact, the latter case cannot happen for any $1\leq k\leq n-1$. Suppose $v_{k_0}\equiv 0$, Equation (\ref{keyequation}) for $v_{k_0}$ implies $v_{k_0-1}=v_{k_0+1}\equiv 0$. Repeating this process, we obtain that for all $k$, $v_k\equiv 0$. Plugging in $v_k\equiv 0$ to Equation (\ref{prekeyequation}) for all $k$, we have that the $\beta_{lj},a_{lj}$ must vanish. Then the Higgs bundle is the base $n$-Fuchsian case, which cannot happen by assumption.
\end{proof}

Now we are ready to show the main theorem.
\begin{thm} \label{Maintheorem}
Let $\rho$ be a Hitchin representation for $PSL(n,\mathbb R)$, $g_0$ be a hyperbolic metric on $S$, and $f$ be the unique $\rho$-equivariant harmonic map from $(\widetilde S, \widetilde g_0)$ to $SL(n,\mathbb R)/SO(n)$. Then its energy density $e(f)$ satisfies
$$e(f)\geq 1.$$
Moreover, equality holds at one point only if $e(f)\equiv 1$ in which case $\rho$ is the base $n$-Fuchsian representation of $(S,g_0)$.  

\end{thm}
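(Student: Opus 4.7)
The plan is to express $e(f)$ explicitly in terms of the Hermitian metric components $\hat h_l$ of the orthogonal splitting, reduce the desired inequality to a convex-combination estimate involving the differences $z_{l+1}-z_l$, and close the argument using Lemma \ref{estimate}.

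First I would compute $\text{tr}(\hat\phi\hat\phi^{*_H})$ using the explicit matrix forms of $\phi$ and $\phi^{*_H}$ derived in Section \ref{Hitchin}. The subdiagonal ones $1_l$ in $\phi$ pair with their adjoints $1_l^{*}=\hat h_l^{-1}\hat h_{l+1}$ in $\phi^{*_H}$ to contribute exactly $\sum_{l=1}^{n-1}\hat h_{l+1}\hat h_l^{-1}$ to the trace, while every remaining contribution is an $|a_{ij}|^2$-type term with non-negative coefficient. Combined with the formula $e(f)=\frac{12}{n(n^2-1)\hat g_0}\text{tr}(\hat\phi\hat\phi^{*_H})$ from Section \ref{pre}, this yields
\[
e(f)\;\geq\;\frac{12}{n(n^2-1)\hat g_0}\sum_{l=1}^{n-1}\hat h_{l+1}\hat h_l^{-1},
\]
with equality at a point forcing every $a_{ij}$ to vanish there.

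Writing $\hat h_l=\tilde h_l\,e^{z_l}$ and using the base-$n$-Fuchsian identity $\tilde h_{l+1}\tilde h_l^{-1}=\tfrac{l(n-l)}{2}\hat g_0$ from Section \ref{Hitchin}, together with $\sum_{l=1}^{n-1}\tfrac{l(n-l)}{2}=\tfrac{n(n^2-1)}{12}$, the inequality reduces to
\[
\sum_{l=1}^{n-1}\frac{l(n-l)}{2}\bigl(e^{z_{l+1}-z_l}-1\bigr)\;\geq\;0.
\]
I would then apply $e^x\geq 1+x$ termwise, reducing to the linear inequality $\sum_l\tfrac{l(n-l)}{2}(z_{l+1}-z_l)\geq 0$. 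An Abel summation, using $z_l=v_l-v_{l-1}$ and the boundary values $v_0=v_n=0$, telescopes the left-hand side into $-\sum_{l=1}^{n-1}v_l$. This is non-negative by Lemma \ref{estimate}, so $e(f)\geq 1$.

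For the equality case, $e(f)(p)=1$ forces every intermediate inequality to be sharp at $p$: first $a_{ij}(p)=0$ for all $i,j$; next $z_{l+1}(p)-z_l(p)=0$ for all $l$ by strict convexity of $e^x$; and finally $\sum_{l=1}^{n-1}v_l(p)=0$ combined with $v_l\leq 0$ yields $v_l(p)=0$ for every $l$. The strict part of Lemma \ref{estimate} then forces $\rho$ to be the base $n$-Fuchsian representation, in which case $e(f)\equiv 1$. The main bookkeeping obstacle is cleanly isolating the $\sum \hat h_{l+1}\hat h_l^{-1}$ contribution from $\text{tr}(\hat\phi\hat\phi^{*_H})$ and verifying that all leftover cross terms are manifestly non-negative; once this is done, the analytic content of the theorem is entirely carried by Lemma \ref{estimate}.
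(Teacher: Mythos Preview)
Your argument is correct and follows essentially the same route as the paper: drop the nonnegative $\|a_{ij}\|^2$ contributions from $\text{tr}(\phi\phi^{*_H})$, rewrite the remaining sum as $\sum_l \tfrac{l(n-l)}{2}e^{z_{l+1}-z_l}$, Abel-sum the exponents to produce $-\sum_{k=1}^{n-1} v_k$, and invoke Lemma~\ref{estimate}. The only difference is the convexity step---you use the tangent-line bound $e^x\geq 1+x$ where the paper applies the weighted AM--GM inequality $\sum c_l e^{x_l}\geq (\sum c_l)\exp\bigl(\sum c_l x_l/\sum c_l\bigr)$; both reduce to the same linear condition $\sum c_l(z_{l+1}-z_l)\geq 0$, and your version is arguably the more direct of the two.
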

\begin{proof}、
Firstly, in the base $n$-Fuchsian representation case, we have $e(f)\equiv 1$ from Equation (\ref{Fuchsian}). Now suppose $\rho$ is not the base $n$-Fuchsian, we have
\begin{eqnarray*}
\text{tr}(\phi\phi^{*_H})=(\sum_{k=1}^{n-1}h_k^{-1}h_{k+1})+\sum\limits_{l=1}^n\sum\limits_{j=1}^{l}||a_{jl}||^2\geq \sum_{k=1}^{n-1}h_k^{-1}h_{k+1}=\sum_{k=1}^{n-1}\tilde h_k^{-1}\tilde h_{k+1}\cdot e^{-z_k+z_{k+1}}.
\end{eqnarray*}
Recall in the case of the base $n$-Fuchsian representation, $\tilde h_k^{-1}\tilde h_{k+1}=\frac{k(n-k)}{2}h$. So \begin{eqnarray*}
\text{tr}(\phi\phi^{*_H})/h&=&\sum\limits_{k=1}^{n-1}[\frac{k(n-k)}{2}\cdot e^{-z_k+z_{k+1}}]\\
&\geq& [\sum\limits_{k=1}^{n-1}\frac{k(n-k)}{2}]\cdot \Big \{e^{\sum\limits_{k=1}^{n-1}\frac{k(n-k)}{2}(-z_k+z_{k+1})}\Big\}^{\frac{1}{\sum\limits_{k=1}^{n-1}\frac{k(n-k)}{2}}}\\
&=&\frac{n(n^2-1)}{12}\cdot (e^{\sum\limits_{k=1}^{n-1}\frac{k(n-k)}{2}(-z_k+z_{k+1})})^{\frac{12}{n(n^2-1)}}\\
&=&\frac{n(n^2-1)}{12}\cdot (e^{-\sum\limits_{k=1}^{n-1}\frac{n+1-2k}{2}z_k})^{\frac{12}{n(n^2-1)}}.\\
&&\text{using $z_n=-z_1-z_2-\cdots-z_{n-1}$}\\
&=&\frac{n(n^2-1)}{12}\cdot(e^{-[(n-1)z_1+(n-2)z_2+\cdots+2z_{n-2}+z_{n-1}]})^{\frac{12}{n(n^2-1)}}\\
&=&\frac{n(n^2-1)}{12}\cdot(e^{-\sum\limits_{k=1}^{n-1}z_k}\cdot e^{-\sum\limits_{k=1}^{n-2}z_k}\cdot \cdots\cdot e^{-\sum\limits_{k=1}^{2}z_k} \cdot e^{-z_1})^{\frac{12}{n(n^2-1)}}\\
&>&\frac{n(n^2-1)}{12},\quad \text{by Lemma \ref{estimate}, $v_k=\sum_{l=1}^kz_k<0$}.\\
\end{eqnarray*}
Hence $e(f)=2Re(\frac{12}{n(n^2-1)}\cdot \text{tr}(\phi\phi^{*_H}))/g_0>2Re(h)/g_0=1$.
\end{proof}

\section{Further questions}\label{question}
There is a natural $\mathbb{C}^*$-action on the moduli space $M_{Higgs}$ of $SL(n,\mathbb{C})$-Higgs bundles given by
\begin{eqnarray*}
\mathbb C^*\times \mathcal{M}_{Higgs}&\rightarrow& \mathcal{M}_{Higgs}\\
t\cdot (E,\phi)&=&(E,t\phi).\end{eqnarray*} Hitchin \cite{Hitchin92} showed that along the $\mathbb{C}^*$-flow, the energy is monotonically increasing as $|t|$ increases. From integral monotonicity to pointwise monotonicity, we ask the following natural question.

\begin{conj}\label{flow}
Along the $\mathbb{C}^*$-flow on the moduli space $\mathcal{M}_{Higgs}$, the energy density of the corresponding harmonic maps is monotonically increasing as $|t|$ increases. 
\end{conj}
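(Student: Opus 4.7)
The plan is to prove pointwise monotonicity by differentiating the Hitchin equation in the flow parameter and then extracting a maximum-principle argument. Write $s=|t|^2>0$ and let $H_s$ denote the harmonic metric on $(E,t\phi)$, so that $F^{\nabla^{H_s}}+s[\phi,\phi^{*_{H_s}}]=0$. The implicit function theorem applied to the Hitchin operator on the open locus of stable Higgs bundles shows that $s\mapsto H_s$ is smooth, so the endomorphism $\eta_s := H_s^{-1}\partial_s H_s$, which is Hermitian with respect to $H_s$, is well-defined. In the notation of Section \ref{pre}, $e(f_s)=\frac{12\,s}{n(n^2-1)\hat g_0}\,\mathrm{tr}(\phi\phi^{*_{H_s}})$, so the conjecture is equivalent to pointwise nonnegativity of $\partial_s\bigl(s\,\mathrm{tr}(\phi\phi^{*_{H_s}})\bigr)$.

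First I would compute $\partial_s\phi^{*_{H_s}}=[\phi^{*_{H_s}},\eta_s]$ and combine it with Hitchin's equation to obtain
$$\partial_s\bigl(s\,\mathrm{tr}(\phi\phi^{*_{H_s}})\bigr)=\mathrm{tr}(\phi\phi^{*_{H_s}})-\mathrm{tr}(\eta_s\, F^{\nabla^{H_s}}).$$
Differentiating the Hitchin equation itself yields a linear elliptic equation for $\eta_s$,
$$\bar\partial\partial^{\nabla^{H_s}}\eta_s+[\phi,\phi^{*_{H_s}}]+s\bigl[\phi,[\phi^{*_{H_s}},\eta_s]\bigr]=0.$$
The next step would be to apply a Weitzenb\"ock identity on $\mathrm{End}(E)$-valued forms to this equation, using the Hitchin equation to cancel curvature terms, and thereby derive an elliptic inequality of the form $\triangle_{g_0}\bigl(\partial_s e(f_s)\bigr)\geq -C\cdot\partial_s e(f_s)$ (with $C$ possibly depending on $s$ and on $e(f_s)$). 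The strong maximum principle, together with the boundary behavior at $s=0$ (where $e(f_0)$ is constant on the Hodge bundle limit), would then force $\partial_s e(f_s)\geq 0$ pointwise, with equality implying $\eta_s\equiv 0$ and identifying the point with a $\mathbb{C}^*$-fixed Hodge bundle.

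The main obstacle is controlling the sign of the cross term $\mathrm{tr}(\eta_s\, F^{\nabla^{H_s}})$, which has no obvious definite sign and which fights against the positive term $\mathrm{tr}(\phi\phi^{*_{H_s}})$. Within the Hitchin component I would attempt an alternative approach that mirrors the proof of Theorem \ref{Maintheorem}: fix $s_0<s_1$ and, in the smooth orthogonal decomposition of Section \ref{Hitchin} (which itself depends on the metric), set $\zeta_l:=\log\hat h_l^{(s_1)}-\log\hat h_l^{(s_0)}$ and $w_k:=\sum_{l=1}^k\zeta_l$. Subtracting the two Hitchin equations (\ref{firstmainequation}) at $s_0$ and $s_1$, summing over $1\leq l\leq k$, and invoking the cancellation formula (\ref{formula}) should give a system analogous to (\ref{keyequation}); the two-step maximum principle of Lemma \ref{estimate} would then yield $w_k\leq 0$, and the AM--GM chain at the end of the proof of Theorem \ref{Maintheorem} would convert this into pointwise monotonicity of $s\,\mathrm{tr}(\phi\phi^{*_{H_s}})$. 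The hard technical point is that the orthogonal decomposition depends on the harmonic metric, so the off-diagonal terms $\beta_{ij},a_{ij}$ and even the splitting of $E$ itself vary with $s$; identifying a robust replacement for (\ref{formula}) that survives this drift, and extending the argument beyond the Hitchin component, appears to be the crux of a proof of the conjecture.
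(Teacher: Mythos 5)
This statement is Conjecture \ref{flow}: the paper offers no proof of it, only the remark that the cyclic case is established in \cite{DL2}, so there is no argument of the author's to compare yours against. Your proposal is an honest research plan rather than a proof, and the two places where it stops are exactly the places where a proof would have to happen. In the first approach, the variation formulas $\partial_s\phi^{*_{H_s}}=[\phi^{*_{H_s}},\eta_s]$ and $\partial_s\bigl(s\,\mathrm{tr}(\phi\phi^{*_{H_s}})\bigr)=\mathrm{tr}(\phi\phi^{*_{H_s}})-\mathrm{tr}(\eta_s F^{\nabla^{H_s}})$ are correct, but the step from there to an inequality $\triangle_{g_0}(\partial_s e)\geq -C\,\partial_s e$ is asserted, not derived, and even granting it, the maximum principle does not give $\partial_s e\geq 0$ from such an inequality: with $C\geq 0$ the zeroth-order coefficient has the wrong sign, so no version of the strong maximum principle forces nonnegativity without an a priori sign on $\partial_s e$ --- which is the thing to be proved. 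The cyclic case in \cite{DL2} succeeds precisely because the harmonic metric stays diagonal along the flow and the linearized system closes up with signed coefficients; no analogue of that structure is produced here.

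The second approach inherits the same difficulty in a sharper form. The entire force of Lemma \ref{estimate} is that after summing the first $k$ equations, \emph{all} of the unknown quadratic terms $||\beta_{lj}||^2, ||a_{lj}||^2$ appear with one sign (Formula (\ref{formula})) and can be discarded to obtain the differential inequality (\ref{keyequation}). If you subtract two instances of (\ref{prekeyequation}) at parameters $s_0<s_1$, the quantity $\sum_{l\leq k}\sum_{j>k}\bigl(||\beta_{lj}||^2+||a_{lj}||^2\bigr)\big|_{s_1}-\sum_{l\leq k}\sum_{j>k}\bigl(||\beta_{lj}||^2+||a_{lj}||^2\bigr)\big|_{s_0}$ is a difference of two nonnegative terms with no sign, so the two-step maximum principle applied to $w_k$ has nothing to act on. This is not merely the ``drift'' of the splitting you mention (the quotient metrics on $F_k/F_{k-1}$, and hence $\zeta_l$ and $w_k$, are in fact well defined independently of the splitting); it is that the comparison destroys the one-signedness that the whole method rests on. Finally, the conjecture is stated for all of $\mathcal{M}_{Higgs}$, where the Hitchin normal form (\ref{expression}) and the filtration underlying the orthogonal decomposition are unavailable. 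As it stands the proposal identifies the right obstructions but does not overcome them, so the conjecture remains open under this plan.
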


For any Higgs bundle $(E,\phi)$ in the Hitchin component, as $t\rightarrow 0$, the limit $\lim_{t\rightarrow 0}t\cdot (E,\phi)$ corresponds to the base $n$-Fuchsian case. Therefore, the conjecture is a natural generalisation of Theorem \ref{maintheorem}. For cyclic Higgs bundles, Conjecture \ref{flow} is shown by the author and Dai in \cite{DL2}. \\

Notice that the $\mathbb C^*$ always take elements in one Hitchin fiber to elements in distinct Hitchin fiber unless the element is based at the origin. If instead, we stay at in the same Hitchin fiber, Hitchin section is expected to be the maximum in terms of the energy density.
\begin{conj}\label{main} 
Let $(\tilde{E},\tilde{\phi})$ be a Higgs bundle in the Hitchin component and $(E,\phi)$ be a distinct polystable $SL(n,\mathbb{C})$-Higgs bundle in the same Hitchin fiber. Then the corresponding harmonic maps $f, \tilde f$ satisfy $e(f)<e(\tilde f)$ and hence $f^*g_{G/K}<\tilde f^*g_{G/K}.$ 

As a result, the energy satisfies $E(f)<E(\tilde f)$.
\end{conj}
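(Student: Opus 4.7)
The plan is to extend the strategy of Theorem \ref{Maintheorem} from a comparison with the base $n$-Fuchsian case to a comparison within a single Hitchin fiber, using the Hitchin-section harmonic metric $\tilde H$ as the reference in place of the base $n$-Fuchsian metric. The argument is already carried out for cyclic Higgs bundles in \cite{DL2}, so the task is to run the same kind of maximum-principle argument using the smooth orthogonal splitting machinery of Section \ref{Hitchin}, applied simultaneously to both $(E,\phi)$ and $(\tilde E, \tilde\phi)$.

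First, I would construct for $(E,\phi)$ an $H$-orthogonal smooth decomposition $E = L_1' \oplus \cdots \oplus L_n'$ parallel to the one of Section \ref{Hitchin}, by producing a holomorphic filtration $0 = F_0' \subset F_1' \subset \cdots \subset F_n' = E$ compatible with $\phi$ in the sense that $\phi(F_k')\subseteq F_{k+1}'\otimes K$ and the induced quotient map $F_k'/F_{k-1}' \to F_{k+1}'/F_k'\otimes K$ is a constant isomorphism, and then taking $L_k' \subset F_k'$ to be the $H$-orthogonal complement of $F_{k-1}'$. Since $(E,\phi)$ and $(\tilde E,\tilde\phi)$ share a spectral cover $\bar\Sigma \to \Sigma$, the BNR correspondence should allow one to compare the graded quotients $L_k'$ with the Hitchin-section line bundles $\tilde L_k \cong K^{(n+1-2k)/2}$, and in particular to set up simultaneous PDE systems of the form (\ref{firstmainequation}) for the two harmonic metrics.

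Next, in the resulting matched frames, I would write $h_l = \tilde h_l \cdot e^{z_l}$ and set $v_k = \sum_{l=1}^k z_l$. Subtracting the two Hitchin systems and summing over $1\leq l\leq k$, the telescoping identity (\ref{formula}) from Lemma \ref{estimate} should yield a schematic inequality
\begin{equation*}
\triangle_{g_0} v_k + (e^{v_{k-1}+v_{k+1}-2v_k}-1)\cdot \tfrac{k(n-k)}{2} \geq 0, \qquad v_n \equiv 0,
\end{equation*}
provided the off-diagonal contributions $||\beta_{ij}||^2, ||a_{ij}||^2$ from $(E,\phi)$ and $(\tilde E,\tilde\phi)$ combine with favorable signs. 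The three-step maximum-principle argument of Lemma \ref{estimate} then gives $v_k \leq 0$ for $1\leq k\leq n-1$, with strict inequality unless $(E,\phi)=(\tilde E,\tilde\phi)$. From the bound $\text{tr}(\phi\phi^{*_H}) \geq \sum_{k=1}^{n-1}\tilde h_k^{-1}\tilde h_{k+1}\cdot e^{-z_k+z_{k+1}}$, the weighted AM-GM computation at the end of Theorem \ref{Maintheorem}'s proof then delivers $e(f)<e(\tilde f)$ pointwise, hence $f^*g_{G/K}<\tilde f^*g_{G/K}$ and $E(f)<E(\tilde f)$ upon integration.

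The main obstacle is the very first step. The Hitchin-section filtration $F_k = \bigoplus_{i=1}^k K^{(n+1-2i)/2}$ is a canonical feature of the Kostant/Frobenius slice, and for a generic polystable Higgs bundle in the same Hitchin fiber the underlying bundle $E$ need not even split as a direct sum of such line bundles; identifying a filtration whose graded pieces behave compatibly with both $\phi$ and the $H$-orthogonality demanded by the PDE analysis appears to require a careful spectral-curve argument away from the branch locus combined with a global patching across branch points. A secondary difficulty is that, unlike in Theorem \ref{Maintheorem}, the off-diagonal terms for the reference bundle $(\tilde E,\tilde\phi)$ no longer vanish, so preserving the favorable sign in the summed system will need a genuinely finer estimate than the one used for the base $n$-Fuchsian case.
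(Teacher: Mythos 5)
This statement is Conjecture \ref{main}: the paper offers no proof of it (only the $SL(2,\mathbb C)$ case of Deroin--Tholozan and a forthcoming special case are cited), so there is nothing to compare your argument against; what matters is whether your program would actually close the conjecture, and it would not. The first gap is the one you flag yourself, and it is fatal rather than technical: the filtration $0=F_0\subset F_1\subset\cdots\subset F_n=E$ with $F_k/F_{k-1}\cong K^{(n+1-2k)/2}$, $\phi(F_k)\subset F_{k+1}\otimes K$, and constant induced isomorphisms on graded pieces is not an auxiliary device but is essentially the defining structure of the Hitchin section (it encodes the regular nilpotent $\tilde e_1$ term of the Kostant slice). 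A distinct polystable Higgs bundle in the same fiber will in general admit no such filtration --- generically it is a pushforward of a line bundle from the spectral curve, its underlying bundle need not contain $K^{(n-1)/2}$ as a subsheaf at all, and sharing a spectral cover only identifies characteristic polynomials, not frames. So the PDE system (\ref{firstmainequation}) simply cannot be written down for $(E,\phi)$ in a form matched to $(\tilde E,\tilde\phi)$, and the cancellation in (\ref{formula}), which depends on the strictly lower-triangular-plus-constants shape of $\phi$, is unavailable.

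The second gap is a sign error that would sink the argument even if the decomposition existed. The entire mechanism of Lemma \ref{estimate} and Theorem \ref{Maintheorem} produces a \emph{lower} bound: $v_k<0$ feeds into $\operatorname{tr}(\phi\phi^{*_H})\geq\sum_k \tilde h_k^{-1}\tilde h_{k+1}e^{-z_k+z_{k+1}}$ and the weighted AM--GM to show the deformed map has energy density \emph{at least} that of the reference. Conjecture \ref{main} asserts the opposite inequality, $e(f)<e(\tilde f)$, i.e.\ the Hitchin section is the \emph{maximum} in its fiber; you would need an \emph{upper} bound on $\operatorname{tr}(\phi\phi^{*_H})$, which the displayed inequality cannot yield since it discards the nonnegative terms $\sum\|a_{jl}\|^2$ in the wrong direction. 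Relatedly, the favorable sign in Equation (\ref{prekeyequation}) comes from the fact that for the base $n$-Fuchsian reference all $\beta_{ij},a_{ij}$ vanish, so the source term $-\sum\|\beta_{lj}\|^2-\sum\|a_{lj}\|^2$ is unambiguously nonpositive; when both bundles carry nonvanishing off-diagonal data, the difference of the two source terms has no definite sign, and no maximum principle applies. Your proposal is a reasonable research program, but as a proof it is missing both the structural input (a usable decomposition for general fiber elements) and the analytic input (an estimate running in the required direction).
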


In the $SL(2,\mathbb C)$ case, this is shown by Deroin-Tholozan \cite{DominationFuchsian}. 

In an upcoming paper \cite{Li}, the author will generalize the result of Deroin and Tholozan and show that in the Hitchin fiber at $(q_2,0,\cdots,0)$, the Hitchin section achieves the maximum of the energy density. In this case, it will imply the domination of the classical translation length spectrum of the representation by an $n$-Fuchsian representation.

\end{document}